\theoremstyle{plain} 
\newtheorem{theorem}{Theorem}[section]
\newtheorem{lemma}{Lemma}[section]
\theoremstyle{definition}
\theoremstyle{remark}
\newtheorem{remark}{Remark}[section]
\DeclarePairedDelimiter{\abs}{\lvert}{\rvert} 
\DeclarePairedDelimiter{\norm}{\lVert}{\rVert}  
\DeclareMathOperator{\divergenza}{div}
\renewcommand{\div}{\divergenza}
\DeclareMathOperator{\sgn}{sgn}
\DeclareMathOperator{\dx}{dx}
\newcommand{\R}{\mathbb{R}}
\newcommand{\C}{\mathbb{C}}
\newcommand{\D}{\mathscr{D}}
\newcommand{\normeq}[1]{{\left\vert\kern-0.25ex\left\vert\kern-0.25ex\left\vert #1 
    \right\vert\kern-0.25ex\right\vert\kern-0.25ex\right\vert}}
\newenvironment{system}%
{\left\lbrace\begin{array}{@{}l@{}}}%
{\end{array}\right.}
\title[Spectral properties of Lamé operators]{\textbf{Uniform resolvent estimates and absence of eigenvalues for Lamé operators with potentials}}
\author{Lucrezia Cossetti}
\address{Lucrezia Cossetti: Dipartimento di Matematica, Sapienza Università di Roma, P.le A. Moro 5, 00185, Roma}
\email{cossetti@mat.uniroma1.it}
\subjclass[2010]{35J47, 47A10}
\keywords{Lamé operator, Helmholtz equation, Spectral Theory, Resolvent estimates}
\begin{document}

\date{\today}


\begin{abstract}
We consider the $0$-order perturbed Lamé operator $-\Delta^\ast + V(x)$. 
It is well known that if one considers the free case, namely $V=0,$ the spectrum of $-\Delta^\ast$ is purely continuous and coincides with the non-negative semi-axis. 
The first purpose of the paper is to show that, at least in part, this spectral property is preserved in the perturbed setting. Precisely, developing a suitable multipliers technique, we will prove the absence of point spectrum for Lamé operator with potentials which satisfy a variational inequality with suitable small constant. We stress that our result also covers complex-valued perturbation terms. Moreover the techniques used to prove the absence of eigenvalues enable us to provide uniform resolvent estimates for the perturbed operator under the same assumptions about $V$. 
\end{abstract}


\maketitle


\section{Introduction}
This paper is concerned with operators of the form
\begin{equation*}
	-\Delta^\ast + V(x)
\end{equation*}  
acting on the Hilbert space $[L^2(\R^d)]^d$ that is the Hilbert space of the vector fields with components in $L^2(\R^d)$, where $-\Delta^\ast$ denotes the Lamé operator of elasticity, that is a linear symmetric differential operator of second order that acts on smooth $L^2$ vector fields $u$ on $\R^d,$ for example $[C^\infty_c(\R^d)]^d,$ in this way:
\begin{equation*}
	-\Delta^\ast u:= -\mu(x) (\Delta u_1, \Delta u_2, \dots, \Delta u_d) 
									 - (\lambda(x) + \mu(x)) \nabla \div(u_1,u_2, \dots, u_d).
\end{equation*}
Let us recall that in order to obtain the positivity of the quadratic form associated with this operator, we assume
\begin{equation}
	\label{positive_condition}
		\mu>0, \, 
		\lambda> - \frac{2}{3}\mu;  
\end{equation}
the functions $\lambda$ and $\mu$ are the so called Lamé's coefficients.
Moreover $V(x)$ is a notation for the multiplication operator by the function $V(x)$ that, in our setting, we are assuming to be complex-valued measurable function on $\R^d.$  
We stress that, under this assumption on $V,$ the context we are working in is a non self-adjoint setting. 

The theory of non self-adjoint operators is very much less unified than the self-adjoint one. 
Nevertheless, recently there is a growing interest in this context, indeed there is an increasing number of problems, particularly in physics, which require the analysis of non self-adjoint operators.

Just to quote a pair of them, first let us mention the very recent problem to investigate on the distribution, in the complex plane, of eigenvalues of the non self-adjoint Schr\"odinger operators. Roughly speaking this is the problem to find the correct analogue to the Lieb-Thirring inequalities (cf.~\cite{L_T}) for non self-adjoint operators, see, for example,~\cite{A_A_D, L_S, D_H_K-II, F_S}.

An other context, in which dealing with non self-adjoint operator turns out to be useful, is the study of complex resonances of self-adjoint Schr\"odinger operator (cf.~\cite{A_A_D, B_O}). 
If one uses the techniques of complex scaling, then these resonances turn into eigenvalues of associated (unitarily equivalent) non self-adjoint Schr\"odinger operator.

Fanelli, Krej\v{c}i\v{r}\'ik and Vega, in a very recent work~\cite{F_K_V}, that primarily motivates our paper, improve the state of the art in the picture of spectral properties for non self-adjoint Schr\"odinger operators.

Precisely, using a generalized version of the Birman-Schwinger principle, they proved that in three dimensions the spectrum of Schr\"odinger operator $H$ is purely continuous and coincides with the non-negative semi-axis for all complex-valued potential satisfying the following form subordinated smallness condition:
\begin{equation}
\label{Laplace_smallness_condition}
	\exists\, a<1,\quad \forall \psi \in H^1(\R^d), 
	\qquad \int_{\R^d} \abs{V}\abs{\psi}^2 \leq a \int_{\R^d} \abs{\nabla \psi}^2.  
\end{equation}    
The relevance of this condition relies on the fact that, first it seems to be new also in the self-adjoint setting, moreover if one is interested in the less strong result about the mere absence of eigenvalues, this is a weaker condition than the more classical ones (for example, the belonging to the Rollnick class) and, at the same time, it seems to recover potentials that are not typically covered by previous works treating those arguments.

In addition to this result,  with a completely alternative approach, which substantially exploits a suitable development of the multipliers technique, they prove the absence of point spectrum in all dimensions, under a stronger hypothesis on the potential then the previous one.

Our first purpose goes in this direction. Precisely we want to investigate if some spectral stability properties are preserved under small perturbations of the operator we are dealing with, i.e. the Lamé operator.

The Helmholtz decomposition strongly comes into play. In fact, making use of this tool, which is a standard way to decompose smooth vector fields into a sum of a divergence free vector field and a gradient, we can explicitly see the connection between the Lamé operator and the Laplacian. Indeed, it is very easy to see that, for any $u=u_p+u_S,$ the operator $-\Delta^\ast$ acts on $u$ in this way:
\begin{equation*}
	-\Delta^\ast u= -\mu(x) \Delta u_S -(\lambda(x) +2\mu(x)) \Delta u_P, 
\end{equation*} 
where the component $u_S$ is the divergence free vector field and the component $u_P$ is the gradient.

The relation between the two operators, that the previous identity have highlighted, has played a fundamental role in order to motivate this paper. In fact, our purpose is to prove a result that is the counterpart of that Fanelli, Krej\v{c}i\v{r}\'ik and Vega in~\cite{F_K_V} have proved for the Laplacian and we are going to do this following a strategy that is the natural generalization of their one.  

Precisely we will prove the following result, which substantially guarantees that, in $d\geq 3,$ the absence of eigenvalues is preserved under suitable smallness and complex perturbation of the operator.

\begin{theorem} \label{thm:absence_of_eigenvalues}
	Let $d\geq 3.$ Assume that $\lambda, \mu \in \R$ satisfy~\eqref{positive_condition}
	and that $V\colon \R^d \to \C$ is such that 
	\begin{equation} \label{smallness_condition_l}
		\forall\, \psi\in H^1(\R^d), \qquad \int_{\R^d} \abs{x}^2 \abs{V}^2 \abs{\psi}^2\leq \Lambda^2 \int_{\R^d} \abs{\nabla \psi}^2,  
	\end{equation} 
	where $\Lambda$ satisfies  
	\begin{equation} \label{Lambda_condition}
		\frac{4\Lambda}{\min\{\mu, \lambda + 2\mu\}} \frac{d(2d-3)}{d-2} (C+1)  
		+\frac{8 \Lambda^\frac{3}{2}}{\sqrt{\min\{\mu, \lambda + 2\mu\}}^3} \frac{d^\frac{3}{2}}{\sqrt{d-2}} (C+1)^\frac{3}{2}<1,
	\end{equation}
	and where $C>0$ is a suitable constant.
	Then $\sigma_p(-\Delta^\ast + V)=\varnothing.$ 
\end{theorem}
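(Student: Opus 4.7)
The plan, inspired by the argument of Fanelli, Krej\v{c}i\v{r}\'ik and Vega~\cite{F_K_V} for the scalar Schr\"odinger operator, is to proceed by contradiction and to exploit the Helmholtz decomposition in order to reduce the Lam\'e eigenvalue problem to a coupled pair of scalar Helmholtz equations, each of which is amenable to the multiplier method. Assume there exist $z\in\C$ and a non-trivial eigenfunction $u\in [H^1(\R^d)]^d$ satisfying $(-\Delta^\ast+V)u=zu$, and decompose $u=u_P+u_S$ with $\div u_S=0$ and $\curl u_P=0$. Denote by $\mathbb{P}_S$ and $\mathbb{P}_P$ the associated Leray and gradient projectors, and observe that they both commute with the scalar Laplacian. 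Projecting the eigenvalue equation onto the two orthogonal subspaces produces the coupled system
\begin{align*}
  -\mu\,\Delta u_S - z\, u_S &= -\mathbb{P}_S(Vu),\\
  -(\lambda+2\mu)\,\Delta u_P - z\, u_P &= -\mathbb{P}_P(Vu),
\end{align*}
in which the sole interaction between the solenoidal and irrotational parts is carried by the zero-order term.

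Next, I would apply the multiplier technique separately to each of these Helmholtz-type equations. The essential multipliers are the componentwise conjugate $\overline{u_{S/P}}$, which after taking real and imaginary parts provides energy and virial identities, together with a Morawetz-type radial multiplier of the form $\left[\partial_r+\tfrac{d-1}{2|x|}\right]\overline{u_{S/P}}$, which combined with Hardy's inequality in dimension $d\geq 3$ should yield an a priori estimate of the shape
\begin{equation*}
  \min\{\mu,\lambda+2\mu\}\,\|\nabla u\|_{L^2}^{2}\leq A_1\,\big\||x|\,\mathbb{P}_S(Vu)\big\|_{L^2}\|\nabla u\|_{L^2}+A_2\,\big\||x|\,\mathbb{P}_P(Vu)\big\|_{L^2}^{3/2}\|\nabla u\|_{L^2}^{1/2}+\text{analogous terms},
\end{equation*}
with $A_1=4d(2d-3)/(d-2)$ and $A_2=8d^{3/2}/\sqrt{d-2}$ the dimensional constants appearing in~\eqref{Lambda_condition}.

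The third step is to absorb the Helmholtz projectors into the potential term. Granted that $\mathbb{P}_S$ and $\mathbb{P}_P$ are bounded on the weighted space $L^2(|x|^2\,\dx)$, say with norm $C$, the triangle inequality gives $\big\||x|\,\mathbb{P}_{S/P}(Vu)\big\|_{L^2}\leq (C+1)\,\big\||x|\,Vu\big\|_{L^2}$, and applying~\eqref{smallness_condition_l} componentwise to each scalar entry of $u$ produces $\big\||x|\,Vu\big\|_{L^2}\leq \Lambda\,\|\nabla u\|_{L^2}$. Substituting these two estimates into the multiplier inequality and dividing by $\min\{\mu,\lambda+2\mu\}\,\|\nabla u\|_{L^2}^{2}$ reproduces precisely the left-hand side of~\eqref{Lambda_condition}, which is strictly less than $1$ by hypothesis. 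This forces $\|\nabla u\|_{L^2}=0$, hence $u\equiv 0$ by $H^1$-decay at infinity, contradicting the initial assumption.

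The hardest step I anticipate is the rigorous justification of the multiplier identities in the vector-valued setting: since $V$ is only form-bounded, the eigenfunction $u$ need not a priori lie in the weighted Sobolev spaces where the Morawetz-type computation is meaningful, and a preliminary regularization via smooth cut-offs and mollification will be needed to perform the integrations by parts, followed by a limit argument keeping track of all constants. A secondary difficulty is to establish the $L^2(|x|^2\,\dx)$-boundedness of the Helmholtz projectors, which is a genuine Calder\'on-Zygmund/Muckenhoupt weight question and is ultimately responsible for the (possibly non-sharp) constant $C$ in the statement.
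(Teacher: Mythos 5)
Your overall architecture coincides with the paper's: decouple the eigenvalue equation through the Helmholtz decomposition into two Helmholtz-type equations whose only interaction is the zero-order term $Vu$; run symmetric and Morawetz-type multipliers on each component; control $\norm{\abs{x}f_{S/P}}$ for $f=Vu$ by the boundedness of the gradient/Leray projection on the weighted space (this is precisely the role of the elliptic regularity Lemma~\ref{elliptic_regularity}, and your identification of the constant $C$ as a weighted Calder\'on--Zygmund/$A_2$ issue is correct, as is your $(C+1)$ bookkeeping and the constants $A_1$, $A_2$); then absorb via~\eqref{smallness_condition_l} and conclude from~\eqref{Lambda_condition}.

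The genuine gap is your treatment of the spectral parameter: you assume a single multiplier inequality of the displayed shape is available for every $z=k_1+ik_2\in\C$, but it is not, and the case analysis in $k$ is in fact the bulk of the paper's proof. The Morawetz computation you sketch (carried out in the paper through the twisted fields $u_S^-$, $u_P^-$ built from the phase $e^{-i\frac{1}{\sqrt{\mu}}k_1^{1/2}\sgn(k_2)\abs{x}}$) needs $k_1\geq 0$ even to define $k_1^{1/2}$; the term with exponent $3/2$ in your inequality arises from bounding a contribution proportional to $\abs{k_2}/k_1^{1/2}$ by means of the $L^2$ bound~\eqref{L_2_bound} coming from the imaginary-part identity, and that step uses $\abs{k_2}\leq k_1$; and a negative term of the form $-c\,\frac{\abs{k_2}}{k_1^{1/2}}\int_{\R^d}\frac{\abs{u_{S/P}}^2}{\abs{x}}$ appears which must be absorbed by the weighted Hardy inequality (this is where $d\geq 3$ enters). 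In the complementary regime $\abs{k_2}>k_1$ (including $k_1<0$) the paper does not use this machinery at all: it uses only the symmetric multipliers $\pm u_{S/P}$ to show that $k_1\pm k_2\geq 0$ unless $u\equiv 0$, combined with the fact that under~\eqref{smallness_condition_l}--\eqref{Lambda_condition} any eigenvalue must satisfy $k_1>0$, which yields the contradiction. Your proposal contains no argument for that regime, so as written the proof of $\sigma_p(-\Delta^\ast+V)=\varnothing$ is incomplete; the regularization issue you flag is real but secondary by comparison.
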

\begin{remark}
The constant $C>0$ that is in the statement of the above theorem is the which one that will appear in the elliptic regularity Lemma~\ref{elliptic_regularity} we will state below.
\end{remark}
\begin{remark}
Let us stress that for the physical case $d=3$ (that is covered by our result), although we only prove the absence of the point spectrum, we expect that an analogous to the stronger result in~\cite{F_K_V} (Theorem 1) holds but this will be matter of other investigation.
We just remark that it is not obvious how to refine the proof in~\cite{F_K_V} and to develop this in our context. This, primarily, is due to the fact that the Birman-Schwinger operator associated to Lamé, instead of Laplace operator, has a ``lack of symmetry'' in his structure which does not allow to apply directly the strategy under the Birman-Schwinger principle.     
\end{remark}

\begin{remark}
We underline that, actually, the most natural generalization of the condition~\eqref{Laplace_smallness_condition} about $V,$ which appears in~\cite{F_K_V}, is not~\eqref{smallness_condition_l} but the following one:
\begin{equation} \label{smallness_condition_a}
		\exists\, a<\min\{\mu, \lambda + 2\mu\}, \qquad \forall\, \psi \in H^1(\R^d), 
		\qquad \int_{\R^d} \abs{V}\abs{\psi}^2 \leq a \int_{\R^d} \abs{\nabla \psi}^2.  
\end{equation}

Moreover, it's not difficult to see that the condition~\eqref{smallness_condition_l}, which is in our result, is stronger than~\eqref{smallness_condition_a} in fact, assuming that~\eqref{smallness_condition_l} holds and making use of the classical Hardy inequality
\begin{equation} \label{classical_Hardy}
	\forall\, \psi\in H^1(\R^d), 
	\qquad \int_{\R^d} \frac{\abs{\psi(x)}^2}{\abs{x}^2} \, dx \leq \frac{4}{(d-2)^2} \int_{\R^d} \abs{\nabla \psi}^2,
\end{equation} 
one has
\begin{equation*}
	\forall\, \psi\in H^1(\R^d) 
	\qquad \int_{\R^d} \abs{V}\abs{\psi}^2 \leq \norm{x V \psi} \norm*{\frac{\psi}{\abs{x}}}\leq \frac{2\Lambda}{d-2}\norm{\nabla \psi}^2,
\end{equation*}
as a consequence of the restriction~\eqref{Lambda_condition} we have $\Lambda< \min \{ \mu, \lambda + 2\mu\} \frac{d-2}{2}$ then $2\Lambda/(d-2)< \min \{ \mu, \lambda + 2\mu\},$ hence~\eqref{smallness_condition_a} holds.
\end{remark}

\begin{remark}
Let us observe that, in our theorem, we are assuming that $\lambda$ and $\mu$ are constants, on the other hand, an interesting open problem, that is not object of this paper, is concerned with the validity of the similar results in the variable-coefficients setting, precisely $\mu=\mu(x)$ and $\lambda=\lambda(x).$
\end{remark}

As a further application of the multipliers technique we have developed to prove Theorem~\ref{thm:absence_of_eigenvalues}, we are also able to perform uniform resolvent estimates for the operator $-\Delta^\ast + V,$  which generalize the ones obtained, for the Helmholtz equation, by Barcel\'o, Vega and Zubeldia in~\cite{B_V_Z}.  
 
Precisely, in the last section of the paper we consider the eigenvalues perturbed equation
\begin{equation} \label{eigenvalue_complete}
	\Delta^\ast u - Vu + ku=f,
\end{equation}
where $k=k_1 + i k_2$ is any complex constant and $f\colon \R^d \to \R^d$ is a measurable function and we will prove, for solution of~\eqref{eigenvalue_complete}, the following result 
\begin{theorem} \label{thm:uniform_resolvent_estimate}
		Let $d\geq 3,$ $\norm{\abs{\,\cdot\,}f}<\infty$ and assume that $V$ satisfies~\eqref{smallness_condition_l}. Then, there exist $c>0$ independent of $k$ and $f$ such that for any solution $u\in [H^1(\R^d)]^d$ of the equation~\eqref{eigenvalue_complete} one has
\begin{equation} \label{uniform_resolvent_estimate}
	\norm{\abs{x}^{-1} u}\leq c \norm{\abs{x} f}.
\end{equation}
\begin{remark}
	We remark that the estimate~\eqref{uniform_resolvent_estimate} is already proved in~\cite{B_F_R_V_V}. On the other hand our integral-smallness assumption on the potential is weaker than the one required in that work. Indeed, to be more precise, the authors provide the uniform resolvent estimate~\eqref{uniform_resolvent_estimate} for the equation
	\begin{equation*}
		\Delta^\ast u + k u -\delta V u=f,
	\end{equation*}
	assuming that $\norm{\abs{x}^2 V}_{L^\infty}<\infty$ and  $\abs{\delta}$ sufficiently small to let the perturbation argument work.  
\end{remark}
\end{theorem}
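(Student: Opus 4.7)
My plan is to adapt the multipliers technique that yields Theorem~\ref{thm:absence_of_eigenvalues} so as to accommodate the forcing term $f$ and the complex spectral parameter $k$. The first step is to apply the Helmholtz decomposition $u=u_P+u_S$, which reduces \eqref{eigenvalue_complete} to the coupled pair of Helmholtz-type equations
\begin{equation*}
\mu\,\Delta u_S+k\,u_S=\mathbb{P}_S(Vu+f),\qquad (\lambda+2\mu)\,\Delta u_P+k\,u_P=\mathbb{P}_P(Vu+f),
\end{equation*}
where $\mathbb{P}_S,\mathbb{P}_P$ denote the Leray projectors onto the solenoidal and gradient subspaces. Since $-\Delta^\ast$ acts as a scalar Laplacian on each subspace, these are non-selfadjoint Helmholtz problems with the two distinct propagation speeds $\mu$ and $\lambda+2\mu$, coupled only through the non-local term $Vu=V(u_S+u_P)$ on the right.

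Next, to each component $v\in\{u_S,u_P\}$ I would apply exactly the multipliers used in the proof of Theorem~\ref{thm:absence_of_eigenvalues}: the solution $\bar v$ together with a Morawetz-type multiplier of the form $x\cdot\nabla\bar v+\tfrac{d-1}{2}\bar v/\abs{x}$, weighted by a suitable radial profile. After integration by parts these produce, on the left-hand side, positive quadratic forms in $\abs{\nabla v}$ and $\abs{v}/\abs{x}$, and on the right-hand side four groups of contributions: (i) terms coming from $kv$, which after taking real and imaginary parts and summing over $v\in\{u_S,u_P\}$ can be arranged to cancel or to be controlled independently of $k$, as in~\cite{B_V_Z}; (ii) terms coming from $\mathbb{P}_{S,P}(Vu)$, which are absorbed through the smallness \eqref{smallness_condition_l} followed by the classical Hardy inequality \eqref{classical_Hardy}, exactly as in Theorem~\ref{thm:absence_of_eigenvalues}; (iii) terms involving $\mathbb{P}_{S,P}f$, which are bounded by Cauchy--Schwarz as $\norm{\abs{x}f}\,\norm{\abs{x}^{-1}u}$; (iv) boundary contributions at infinity, which vanish by the regularity $u\in[H^1(\R^d)]^d$.

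Putting everything together I expect an inequality of the form
\begin{equation*}
\norm{\abs{x}^{-1}u}^2\leq \alpha\,\norm{\abs{x}^{-1}u}^2+c\,\norm{\abs{x}f}\,\norm{\abs{x}^{-1}u},
\end{equation*}
with $\alpha<1$ thanks to~\eqref{Lambda_condition} and $c>0$ depending only on $d,\mu,\lambda,\Lambda$; absorbing the first term on the left and dividing by $\norm{\abs{x}^{-1}u}$ then yields \eqref{uniform_resolvent_estimate}.

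The main obstacle I anticipate is the uniformity in $k\in\C$. When $k$ is real and non-negative it lies in the continuous spectrum of $-\Delta^\ast$, so the unweighted resolvent is unbounded, and one has to verify that the terms produced by $kv$ after testing against the Morawetz multiplier can be absorbed into the weighted quadratic forms generated by the Laplacian without any $k$-dependent loss. A secondary technical point is that $\mathbb{P}_{S},\mathbb{P}_P$ are non-local operators, so the pointwise control of $Vu$ afforded by~\eqref{smallness_condition_l} does not survive projection; one has to estimate $\mathbb{P}_{S,P}(Vu)$ in duality against the multipliers, invoking the $L^2$-boundedness of $\mathbb{P}_{S,P}$ to transfer the estimate back to the full $u=u_S+u_P$.
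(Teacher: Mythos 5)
Your overall route coincides with the paper's: Helmholtz decomposition into two decoupled Helmholtz equations with speeds $\mu$ and $\lambda+2\mu$, the same symmetric and antisymmetric multipliers as in Theorem~\ref{thm:absence_of_eigenvalues}, absorption of the $Vu$ contribution via \eqref{smallness_condition_l} and Hardy, and a final absorption argument (the paper actually first proves the a priori gradient bounds of Theorem~\ref{thm:a_priori_estimates} for the phase-shifted fields $u_S^-,u_P^-$ of \eqref{def:u_S_minus}--\eqref{def:u_P_minus}, and only at the very end converts to $\norm{\abs{x}^{-1}u}$ by Hardy; the quantity the multipliers control is a gradient form, not $\norm{\abs{x}^{-1}u}^2$ directly). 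However, two steps of your plan have genuine gaps. The first concerns the projected data: every right-hand-side term produced by the multipliers carries the weight $\abs{x}$ on the datum, so what is needed is $\norm{\abs{x}f_S}+\norm{\abs{x}f_P}\lesssim \norm{\abs{x}f}$ (and the analogue for $Vu$), i.e. boundedness of the Helmholtz projections on the weighted space $L^2(\abs{x}^2 dx)$. The plain $L^2$-boundedness of the Leray projectors that you invoke does not give this, and pairing by duality only shifts the weighted bound onto the projection of the multiplier, so a weighted estimate is needed in any case. The paper supplies exactly this ingredient through the weighted elliptic regularity Lemma~\ref{elliptic_regularity}: writing $\Delta\psi=\div f$, so that $f_P=\nabla\psi$ and $f_S=f-\nabla\psi$, it gives $\norm{\abs{x}\nabla\psi}\leq C\norm{\abs{x}f}$; the constant $C$ is not cosmetic, since it enters the smallness condition \eqref{Lambda_condition}. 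Without this lemma your steps (ii)--(iii) do not close.

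The second gap is the $k$-uniformity, which you correctly identify as the main obstacle but leave unresolved; it is precisely where the work lies. The paper splits into $\abs{k_2}\leq k_1$ and $\abs{k_2}>k_1$. In the first regime the $k$-dependence is packaged into the phases defining $u_S^-$ and $u_P^-$ (so that $\abs{\nabla u_S^-}^2=\mu\abs{\nabla u_S}^2+k_1\abs{u_S}^2$), the radial weights are chosen as explicit functions of $k$ (including subtracting \eqref{first_u_S} with $\phi_1$ proportional to $\abs{k_2}k_1^{-1/2}\abs{x}$), and the remaining $k$-dependent term is controlled through $\norm{u_S}^2\leq \abs{k_2}^{-1}\int\abs{u_S}\abs{f_S}$ combined with $\abs{k_2}\leq k_1$. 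In the regime $\abs{k_2}>k_1$ (in particular when $k_1\leq 0$, where $k_1^{1/2}$ is meaningless and $\mu\abs{\nabla u_S}^2+k_1\abs{u_S}^2$ need not be positive) the antisymmetric multiplier is abandoned altogether, and only the symmetric multipliers together with the sign of $k_1\pm k_2$ yield $\norm{\nabla u}\leq c\norm{\abs{x}f}$. A single unified Morawetz argument of the kind you sketch breaks at exactly these points, so the claim that the $k$-terms ``can be arranged to cancel or be controlled independently of $k$'' needs this two-case mechanism to become a proof.
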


Actually, in order to prove Theorem~\ref{thm:uniform_resolvent_estimate} we establish the following stronger result, which shows that a priori estimates for solutions of~\eqref{eigenvalue_complete} hold.  
\begin{theorem} \label{thm:a_priori_estimates}
	Let $d\geq 3,$ $\norm{\abs{\,\cdot\,}f}<\infty$ and assume that $V$ satisfies~\eqref{smallness_condition_l}. Then, there exist $c>0$ independent of $k$ and $f$ such that for any solution $u\in [H^1(\R^d)]^d$ of the equation~\eqref{eigenvalue_complete} one has
	\begin{itemize}
		\item for $\abs{k_2}\leq k_1$
			\begin{equation} \label{a_priori_u_S_minus_u_P_minus}
				\norm{\nabla u_S^-}\leq c \norm{\abs{x} f}, \qquad \text{and} \qquad \norm{\nabla u_P^-}\leq c \norm{\abs{x} f},
			\end{equation}
		\item for $\abs{k_2}> k_1$
			\begin{equation} \label{a_priori_u}
				\norm{\nabla u}\leq c \norm{\abs{x} f}.
			\end{equation}
	\end{itemize}
	\begin{remark}
	The vector fields $u_S^-$ and $u_P^-,$ which appear in the statement, are defined in~\eqref{def:u_S_minus} and~\eqref{def:u_P_minus} respectively.
	\end{remark}
\end{theorem}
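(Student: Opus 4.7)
The plan is to extend the multipliers machinery developed for Theorem~\ref{thm:absence_of_eigenvalues} to the inhomogeneous equation~\eqref{eigenvalue_complete}. First I would apply the Helmholtz decomposition $u=u_S+u_P$, which diagonalizes the Lam\'e operator and turns~\eqref{eigenvalue_complete} into a pair of Helmholtz-type equations with complex spectral parameters $k/\mu$ and $k/(\lambda+2\mu)$, coupled only through the projections of $Vu$ onto the solenoidal and gradient parts. On each component $v\in\{u_S,u_P\}$ I would test the corresponding equation against radial Morawetz-type multipliers of the form $\phi(\abs{x})\,x\cdot\nabla\bar v$ together with suitable zeroth-order corrections in the spirit of~\cite{B_V_Z}, then separately take real and imaginary parts. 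Adding the two resulting identities produces, modulo terms of the form $\norm{\abs{x}f}\norm{\nabla v}$ and $\norm{\abs{x}Vu}\norm{\nabla v}$, a quadratic form in $(\nabla v,\,v/\abs{x})$ controlled by the boundary-type ``radiation'' quantities generated by $k$.

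The case split in the statement reflects the position of $k$ in the complex plane. When $\abs{k_2}>k_1$ the imaginary part dominates, providing enough effective ellipticity to extract the full gradient $\norm{\nabla u}$ directly from the imaginary part of the identity. When $\abs{k_2}\le k_1$ the parameter lies close to the positive spectrum of $-\Delta^\ast$ after rescaling, and, as is classical for limiting absorption estimates of Helmholtz type, only the frequency-filtered components $u_S^-$ and $u_P^-$ defined in~\eqref{def:u_S_minus} and~\eqref{def:u_P_minus} admit a bound uniform in $k$; the estimate is therefore obtained for those pieces alone.

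The potential term is then treated by Cauchy--Schwarz: $\bigl|\int Vu\cdot\overline{Mv}\bigr|\le \norm{\abs{x}Vu}\norm{Mv/\abs{x}}$, and the smallness hypothesis~\eqref{smallness_condition_l} combined with the Hardy inequality~\eqref{classical_Hardy} bounds each such contribution by a constant multiple of $\Lambda$ or $\Lambda^{3/2}$ times gradient norms of $u$ and $v$; condition~\eqref{Lambda_condition} is exactly what is needed to absorb all these contributions into the left-hand side, leaving a linear bound by $\norm{\abs{x}f}$. The main obstacle, beyond tracking the numerology that produces~\eqref{Lambda_condition}, is that the Helmholtz decomposition is nonlocal and does not commute with multiplication by $V$: the error terms in the $S$-equation feed into the $P$-equation and vice versa, so coercivity of the combined quadratic form must be established for the \emph{pair} $(u_S,u_P)$ simultaneously. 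This coupling is precisely why the two speeds $\mu$ and $\lambda+2\mu$ enter the final smallness condition only through $\min\{\mu,\lambda+2\mu\}$, and why the two individual estimates in~\eqref{a_priori_u_S_minus_u_P_minus} must be derived in parallel rather than independently.
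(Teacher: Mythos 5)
Your plan follows, in essence, the paper's own route: re-run the multiplier identities already derived in the proof of Theorem~\ref{thm:absence_of_eigenvalues} (which were established for a general right-hand side, i.e.\ the inequalities~\eqref{preliminary_u_S} and~\eqref{preliminary_u_P}) with the source split as $h=f+Vu$; absorb the $Vu$-contributions after summing the $S$ and $P$ inequalities, using~\eqref{smallness_condition_l} and the strict inequality in~\eqref{Lambda_condition}; absorb the $f$-contributions (the terms $\norm{\abs{x}f}\norm{\nabla v}$ and $\norm{\abs{x}f}^{3/2}\norm{\nabla v}^{1/2}$) by Young's inequality with free small parameters, which is what produces the constant $c$ rather than~\eqref{Lambda_condition} itself; and in the regime $\abs{k_2}>k_1$ use only the symmetric multipliers.

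There is, however, one concrete ingredient your sketch uses silently and which does not come for free: the passage from the projected data to the unprojected ones, i.e.\ bounding $\norm{\abs{x}f_S}$, $\norm{\abs{x}f_P}$, $\norm{\abs{x}(Vu)_S}$, $\norm{\abs{x}(Vu)_P}$ by $\norm{\abs{x}f}$ and $\norm{\abs{x}Vu}$. The Helmholtz projection is trivially bounded on $L^2$, but not obviously on the weighted space with weight $\abs{x}^2$; the paper isolates precisely this step as Lemma~\ref{elliptic_regularity}, writing $f=(f-\nabla\psi)+\nabla\psi$ with $\Delta\psi=\div f$ and proving $\norm{\abs{x}\nabla\psi}\leq C\norm{\abs{x}f}$, and the constant $C$ of that lemma even enters the hypothesis~\eqref{Lambda_condition}. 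Without this weighted bound your scheme does not close, since the multiplier identities only see $f_S$, $f_P$, $(Vu)_S$, $(Vu)_P$. A second, minor, imprecision: in the case $\abs{k_2}>k_1$ the gradient is not extracted ``directly from the imaginary part.'' Testing each decoupled equation with the corresponding component itself, the real parts give $\mu\norm{\nabla u_S}^2+(\lambda+2\mu)\norm{\nabla u_P}^2=k_1\norm{u}^2-\Re\int u f$, while the imaginary parts give $\abs{k_2}\norm{u}^2\leq\int\abs{u}\abs{f}$; it is the combination of the two, via $k_1<\abs{k_2}$, Cauchy--Schwarz and Hardy, that yields~\eqref{a_priori_u}, and no antisymmetric (Morawetz) multiplier is needed in that regime.
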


From this, as a straightforward corollary, we easily obtain Theorem~\ref{thm:uniform_resolvent_estimate}.


\section{Preliminaries}

We devote this preliminary section to recall some very well known facts about the Lamé operator which, actually, are interesting in their own sake.   

First of all we want to give a rigorous meaning to the Lamé operator as a self-adjoint operator, i.e.  we want to build the self-adjoint extension of the operator $-\Delta^\ast;$ in order to do that we proceed using a quadratic form approach.

Let us introduce the quadratic form associated with the operator $-\Delta^{\ast},$
\begin{equation*}
	Q_0[u]=\int_{\R^d}\, q_0[u]\, dx,
\end{equation*}   
where
\begin{equation*}
	q_0[u]= \lambda \left(\sum_{i=1}^d \frac{\partial u_i}{\partial x_i} \right)^2 + 
					\frac{\mu}{2} \sum_{i, j=1}^d \left(\frac{\partial u_i}{\partial x_j} + \frac{\partial u_j}{\partial x_i} \right)^2,
					\qquad u\in [C^\infty_c(\R^d)]^d.
\end{equation*}
In order that $q_0[u],$ and, thus, $-\Delta^\ast$ is positive, we assume for $\mu$ and $\lambda$ the condition~\eqref{positive_condition}.

We recall that, since our form $Q_0$ is associated with a densely defined positive and symmetric operator, this form is closable.
Moreover it is easy to see that the domain of the closure of $Q_0$ is the Sobolev space of $H^1$- vector fields.

As $\bar{Q_0}$ is a densely defined lower semi-bounded (actually positive) close form on an Hilbert space, then there is a canonical way to built from it a distinguished self-adjoint extension, called Friedrichs extension, of the symmetric operator $-\Delta^\ast,$ that is the self-adjoint operator we are looking for and that we, again, write as $-\Delta^\ast.$   

Now our purpose is to understand the action of $-\Delta^\ast$ on smooth vector fields. In order to do that we are going to make use of the well known \emph{Helmholtz decomposition}, which is a standard way to decompose a vector field into a sum of a gradient and a divergence free vector field.  
To be more precise, we have that every smooth vector field $u\colon \R^d \to \R^d,$ sufficiently rapidly decaying at infinity, can be uniquely decomposed as 
	\begin{equation*}
		u=u_S + u_P,
	\end{equation*}
	where $\div u_S=0$ and $u_P=\nabla \varphi,$ for some smooth scalar function $\varphi.$
	
\begin{remark}
Let us remark that from the previous assumptions immediately follows  that $u_S$ and $u_P$ are $L^2$- orthogonal. In view of our aims, actually one can also prove that $u_S$ and $u_P$ are $H^1$- orthogonal. 
\end{remark}

Using the Helmholtz decomposition, a straightforward computation shows that for any $u=u_S + u_P,$ the operator $- \Delta^\ast$ acts on $u$ in this way
\begin{equation*}
	-\Delta^\ast u= -\mu \Delta u_S -(\lambda +2 \mu) \Delta u_P,
\end{equation*}   
where, again, $u_S$ is a divergence free vector field and $u_P$ is a gradient. 

The quadratic form associated with the operator $- \Delta^\ast,$ explicitly written in the Helmholtz decomposition, is
\begin{equation*}
	Q_0[u]=\int_{\R^d}\, q_0[u]\, dx,
\end{equation*}
with
\begin{equation*}
	q_0[u]= \mu \abs{\nabla u_S}^2 + 
					(\lambda + 2\mu) \abs{\nabla u_P}^2 
	\qquad \text{and} \qquad \D(Q_0)=[H^1(\R^d)]^d,
\end{equation*}
where $\abs{\nabla v}^2,$ when $v=(v_1, v_2, \dots, v_d)$ is a vector field, denotes $\sum_{j=1}^d \abs{\nabla v_j}^2.$

We observe that here, with an abuse of notation, we have used the same symbol $Q_0$ for the quadratic form associated with $-\Delta^\ast,$ both when its action is written explicitly using the Helmholtz decomposition and when the operator is defined in its classical way.

Let us note that, assuming $\mu>0$ and $\lambda + 2\mu>0,$ then the quadratic form turns out to be positive, i.e. $-\Delta^\ast$ elliptic.  

These conditions on the Lamé's coefficients are weaker than~\eqref{positive_condition} which we have stated above to make positive the quadratic form.

Now we are in position to consider the perturbed operator
\begin{equation*}
	-\Delta^\ast + V(x),
\end{equation*} 
where $V\colon \R^d \to \C$ is the perturbation term.

Clearly, in the Helmholtz decomposition, this operator acts on a smooth vector fields $u$ in this way
\begin{equation*}
	(-\Delta^\ast + V)u= -\mu \Delta u_S -(\lambda +2 \mu) \Delta u_P + V u. 
\end{equation*} 
The corresponding perturbed quadratic form associated with this operator is
\begin{equation*}
	Q_\text{pert}[u]= Q_0[u] + Q_V[u]
									= Q_0[u] + \int_{\R^d}\, q_V[u] \, dx,
\end{equation*}
where 
\begin{equation*}
	q_V[u]= V\abs{u}^2 
	\qquad \text{and} \qquad \D(Q_V)= \left \{u \in [L^2(\R^d)]^d : \int_{\R^d} \abs{V} \abs{u}^2< \infty \right\}.
\end{equation*}
We assume the smallness condition~\eqref{smallness_condition_a} about $V.$
It's not difficult to see that, as a consequence of the constrictions on $a$, $Q_V$ is relatively bounded with respect to $Q_0$ with bound less than one.

Let us suppose, for a moment, that our potential $V$ is real-valued. As a consequence, the sesquilinear form, associated with the quadratic form $Q_V,$ is symmetric.
By virtue of these remarks, we are able to build from $Q_\text{pert}$ an associated self-adjoint operator on $[L^2(\R^d)]^d$ exploiting the well known forms counterpart of the Kato-Rellich perturbation result for operators, namely the KLMN theorem (see for example~\cite{R_S-II}, Thm X.$17$, or~\cite{Schmudgen}, Thm $10.21$). 

If one is dealing with complex-valued potentials, as our setting, instead of real-valued ones, the scenario turns out to be quite different.
In fact, assuming now that $V$ is a complex-valued potential, the sesquilinear form $Q_V$ is no more symmetric and, as a consequence, we clearly can't expect to be able to build from $Q_\text{pert}$ a self-adjoint extension of $-\Delta^\ast + V$.
Nevertheless, even though we are dealing with non symmetric forms, we can obtain useful information about the operator $-\Delta^\ast + V$ by exploiting the theory about sectorial forms (resp. operators).
Precisely we can use the representation theorem (see~\cite{Kato}, Thm. VI$.2.1$) to build an m-sectorial operator from a densely defined, sectorial and closed form.  


\section{Absence of eigenvalues: proof of Theorem~\ref{thm:absence_of_eigenvalues}}
We devote this section to the proof of the Theorem~\ref{thm:absence_of_eigenvalues} we stated in introduction.

We recall that our strategy wants to be build in analogy to that one in the recent work~\cite{F_K_V} of Fanelli, Krej\v{c}i\v{r}\'ic and Vega, who established the analogous result for the Laplace operator.  

First of all, to this end, starting from the eigenvalue equation associated with the perturbed Laplacian, they provided three integral identities which had a crucial role in the proof of their main result; in order to do that they re-adapted to a non self-adjoint setting the standard technique of Morawetz multipliers. 
This tool was introduced in~\cite{Morawetz} for the Klein-Gordon equation and then it was developed in several other contexts. For example, about the Helmholtz equation's framework, let us mention the seminal works of Perthame and Vega~\cite{P_V},~\cite{P_V-II} which concern the purely electric case and then~\cite{F_V,Fanelli,Zubeldia,B_F_R_V,B_V_Z,Zubeldia_II}, which extend the technique in an electromagnetic setting. We should also quote~\cite{C_D_L} for an adaptation of multiplier metod on exterior domains.      

Now we are in position to begin the proof of our result.

The eigenvalue problem for the perturbed Lamé operator is
\begin{equation} \label{eigenvalue_equation}
	\Delta^\ast u + k u=V u,
\end{equation} 
where $k$ is any complex constant (throughout the paper we will denote by $k_1=\Re k$ and by $k_2=\Im k$).

Just to simplify the notations, we start assuming that $u$ is a solution of this more general problem
\begin{equation} \label{f_equation} 
	\Delta^\ast u + k u=f,
\end{equation}
where $f\colon \R^d \to \C^d$ is a measurable function. 

Clearly, we can identify the problem~\eqref{f_equation} with~\eqref{eigenvalue_equation} by setting $f=V u.$
 
As we said above, the Helmholtz decomposition has been a fundamental tool for our purposes, according to this, writing $u=u_S + u_P$ and $f=f_S + f_P,$ the eigenvalues problem~\eqref{f_equation} associated to the Lamé operator can be re-written as
\begin{equation*} 
	\mu \Delta u_S + (\lambda + 2\mu) \Delta u_P + k u_S + k u_P=f_S + f_P,
\end{equation*}
where, again, the $S$ component is the divergence free vector field and the $P$ component is the gradient. 

Let us observe that the equation written in this form is very far to be easy to handle, in fact the two components has the same frequency of oscillation $k$ but different speed of propagation $\mu$ and $\lambda + 2\mu$ respectively, and therefore the first attempt one would like to try is splitting the previous equation into a system of two decoupling equations involving separately the two components $u_S$ and $u_P.$ 
This attempt is going to work indeed, as a consequence of the $H^1$-orthogonality of the $S$ and $P$ components of the Helmholtz decomposition, we are allowed to reduce our ``intertwining'' equation into a system of two decoupling equations
\begin{equation} \label{decoupling}   
	\begin{system}
		\mu \Delta u_S + k u_S=f_S \\
		(\lambda + 2\mu) \Delta u_P + k u_P=f_P.
	\end{system}
\end{equation}
At this point, the next step is, in some sense, obliged. In fact the most natural way to proceed is to prove, separately for the two equations, the analogue estimates which Fanelli, Krej\v ci\v r\'ic and Vega have formerly proved in their paper, clearly provided suitable changes due to the presence of coefficients of the Laplace operators.
 
As a starting point, we consider the weak formulation of~\eqref{decoupling}
\begin{equation} \label{decoupling_weak}
	\forall\, v\in [H^1(\R^d)]^d, \qquad
						\begin{system}
							-\mu (\nabla v, \nabla u_S) + k(v, u_S)=(v,f_S)\\
							-(\lambda + 2\mu) (\nabla v, \nabla u_P) + k(v, u_P)=(v, f_P).
						\end{system}
\end{equation}
Following~\cite{B_V_Z} we divide the proof of our result into two cases depending on the relation between real and imaginary part of the eigenvalue $k$: $\abs{k_2}\leq k_1$ and $\abs{k_2}>k_1.$ 

Let us start by the more technical case $\abs{k_2}\leq k_1$

\begin{description}[style=unboxed,leftmargin=0cm]
\item[Case $\abs{k_2}\leq k_1.$]
Following~\cite{F_K_V} we want to obtain from~\eqref{decoupling_weak}, with suitable algebraic manipulations, three fundamental identities involving only $u_S$ and the analogous identities involving only $u_P.$ 
The first two will be obtained using, as a test function, a symmetric multiplier, the third using an anti-symmetric one.
Throughout the paper we are going to omit arguments of integrated function and the differential $\dx$

Clearly, in order to make rigorous the following integrations by parts we assume $u$ and $f$ sufficiently regular, so that we will get our result by a standard density argument.

We begin with the manipulation of the first of~\eqref{decoupling_weak}. 
Let $\phi_1, \phi_2, \phi_3 \colon \R^d \to \R$ be three sufficiently smooth functions. 

\begin{remark}
Let us underline that only $f$ is a complex-vector-valued function. 
\end{remark}

Choosing as a test function $v:= \phi_1 u_S$ in the first of~\eqref{decoupling_weak}, integrating by parts and taking the real part of the resulting identity, one gets
\begin{equation} \label{first_u_S}
	k_1\int_{\R^d} \phi_1 \abs{u_S}^2 - \mu \int_{\R^d} \phi_1 \abs{\nabla u_S}^2 + \frac{\mu}{2} \int_{\R^d} \Delta \phi_1 \abs{u_S}^2
	= \Re \int_{\R^d} \phi_1 u_S f_S.
\end{equation}
Choosing the same multiplier of before, $v:= \phi_2 u_S$ in the first of~\eqref{decoupling_weak} and taking, this time, the imaginary part of the resulting identity, we arrive at the second identity
\begin{equation} \label{second_u_S}
	k_2 \int_{\R^d} \phi_2 \abs{u_S}^2 = \Im \int_{\R^d} \phi_2 u_S f_S.
\end{equation}
In the end, choosing the antisymmetric multiplier $v:=[\Delta, \phi_3]u_S=2 \nabla \phi_3\cdot \nabla u_S + \Delta \phi_3 u_S$ in the first of~\eqref{decoupling_weak}, integrating by parts and taking the real part of the resulting identity, one obtains
\begin{equation} \label{third_u_S}
	\mu \int_{\R^d} \nabla u_S \cdot D^2 \phi_3\cdot \nabla u_S 
	- \frac{\mu}{4} \int_{\R^d} \Delta^2 \phi_3 \abs{u_S}^2 
	= -\Re \int_{\R^d} \nabla \phi_3 \cdot \nabla u_S f_S - \frac{1}{2} \Re \int_{\R^d} \Delta \phi_3 u_S f_S,
\end{equation}  
where the symbol $[\cdot,\cdot]$ is used, as usual, for the commutator and  $D^2,$ $\Delta^2$  denote, respectively, the Hessian matrix and  the bi-Laplacian.
 
Following the same strategy used to obtain the three above identities for $u_S,$ we have the analogous identities involving only $u_P.$

Choosing the multiplier $v:=\phi_1 u_P$ in the second of~\eqref{decoupling_weak}, one gets
\begin{equation} \label{first_u_P}
	k_1\int_{\R^d} \phi_1 \abs{u_P}^2 - (\lambda + 2\mu) \int_{\R^d} \phi_1 \abs{\nabla u_P}^2 
	+ \frac{\lambda + 2\mu}{2} \int_{\R^d} \Delta \phi_1 \abs{u_P}^2
	= \Re \int_{\R^d} \phi_1 u_P f_P.
\end{equation}
Choosing $v:=\phi_2 u_P$ in the second of~\eqref{decoupling_weak}, one obtains 
\begin{equation} \label{second_u_P}
	k_2 \int_{\R^d} \phi_2 \abs{u_P}^2 = \Im \int_{\R^d} \phi_2 u_P f_P.
\end{equation}
Finally, choosing again the antisymmetric multiplier $v:=[\Delta,\phi_3]u_P,$ we have 
\begin{equation} \label{third_u_P}
	(\lambda + 2\mu) \int_{\R^d} \nabla u_P \cdot D^2 \phi_3\cdot \nabla u_P 
		- \frac{\lambda + 2\mu}{4} \int_{\R^d} \Delta^2 \phi_3 \abs{u_P}^2 
		= -\Re \int_{\R^d} \nabla \phi_3 \cdot \nabla u_P f_P - \frac{1}{2} \Re \int_{\R^d} \Delta \phi_3 u_P f_P.
\end{equation} 
We devote the two following subsections to handle, separately, the three identities for $u_S$ and for $u_P.$ 
From now on, we assume that $\phi_1, \phi_2$ and $\phi_3$ are radial, namely we assume that exist smooth functions $\psi_1, \psi_2, \psi_3\colon [0,\infty) \to \R$ which depend only of $\abs{x}$ such that $\phi_i(x)=\psi_i(\abs{x})$ for all $x\in \R^d$ and $i\in\{1,2,3\}.$\\
A straightforward computation shows that the following relations hold:
\begin{equation*}
	\nabla \phi_i(x)=\psi_i'(\abs{x}) \frac{x}{\abs{x}}, \quad \Delta \phi_i(x)=\psi_i''(\abs{x}) + \psi_i'(\abs{x})\frac{d-1}{\abs{x}},\quad
	D^2 \phi_i(x)=\psi_i''(\abs{x}) \frac{xx}{\abs{x}^2} + \frac{\psi_i'(\abs{x})}{\abs{x}} \left(I- \frac{xx}{\abs{x}^2} \right),
\end{equation*}   
where $I$ denotes the identity matrix and $xx$ is the dyadic product of $x$ and $x.$

Before handling the integral identities, we need some notation. 
We denote the radial derivative and the angular gradient of a generic function $\phi\colon \R^d \to \R$ by
\begin{equation*}
	\partial_r \phi(x):= \frac{x}{\abs{x}} \cdot \nabla \phi(x), \qquad \nabla_\tau \phi(x):=\left(I- \frac{xx}{\abs{x}^2}\right) \cdot \nabla \phi(x),
\end{equation*}
with these definitions it is easy to see that $\abs{\nabla \phi}^2=\abs{\partial_r \phi}^2+ \abs{\nabla_\tau \phi}^2.$

Now we are in position to deal with the $S$ and $P$ components separately.

\begin{description}[style=unboxed,leftmargin=0cm]
\item[$S$ component.]
We will follow again the strategy in~\cite{F_K_V}: taking the sum~\eqref{first_u_S}$ + k_1^{\frac{1}{2}}$~\eqref{second_u_S}$ + $~\eqref{third_u_S} we have
\begin{equation*} 
	\begin{split}
		k_1\int_{\R^d} \phi_1 \abs{u_S}^2& - \mu \int_{\R^d} \phi_1 \abs{\nabla u_S}^2 
		+ \frac{\mu}{2} \int_{\R^d} \Delta \phi_1 \abs{u_S}^2 + k_1^\frac{1}{2} k_2 \int_{\R^d} \phi_2 \abs{u_S}^2\\
		&+\mu \int_{\R^d} \nabla u_S \cdot D^2 \phi_3\cdot \nabla u_S - \frac{\mu}{4} \int_{\R^d} \Delta^2 \phi_3 \abs{u_S}^2\\
		= \Re \int_{\R^d} \phi_1 u_S f_S &+ k_1^\frac{1}{2} \Im \int_{\R^d} \phi_2 u_S f_S
		- \Re \int_{\R^d} \nabla \phi_3 \cdot \nabla u_S f_S - \frac{1}{2} \Re \int_{\R^d} \Delta \phi_3 u_S f_S.
	\end{split}
\end{equation*}
For a moment, we only consider the first two terms and the last one of the first row and the first term of the second row, using also the fact that the $\phi_i,$ for $i=1,2,3,$ are radial, one has
\begin{equation*}
	\begin{split}
		&k_1\int_{\R^d} \phi_1 \abs{u_S}^2 - \mu \int_{\R^d} \phi_1 \abs{\nabla u_S}^2 + k_1^\frac{1}{2} k_2 \int_{\R^d} \phi_2 \abs{u_S}^2 
		+\mu \int_{\R^d} \nabla u_S \cdot D^2 \phi_3\cdot \nabla u_S\\
		&=\int_{\R^d} \abs{u_S}^2 (k_1 \psi_1 + k_1^\frac{1}{2} k_2 \psi_2) + \mu \int_{\R^d} \abs{\partial_r u_S}^2 (\psi_3'' - \psi_1) 
		+ \mu \int_{\R^d} \abs{\nabla_\tau u_S}^2 \left( \frac{\psi_3'}{\abs{x}} - \psi_1 \right).
	\end{split}
\end{equation*}  
Using the latter in the former and modifying the order of the terms, we obtain
\begin{gather*}
	\mu \int_{\R^d} \abs{\partial_r u_S}^2 (\psi_3''-\psi_1) 
	+\mu \int_{\R^d} \abs{\nabla_\tau u_S}^2 \left( \frac{\psi_3'}{\abs{x}} - \psi_1 \right) 
	+\int_{\R^d} \abs{u_S}^2 (k_1 \psi_1 + k_1^\frac{1}{2} k_2 \psi_2)
	+\mu\int_{\R^d} \abs{u_S}^2 \left(\frac{1}{2} \Delta \phi_1 - \frac{1}{4} \Delta^2 \phi_3 \right)\\
	= \Re \int_{\R^d} \phi_1 u_S f_S 
	+ k_1^\frac{1}{2} \Im \int_{\R^d} \phi_2 u_S f_S -\frac{1}{2} \Re \int_{\R^d} \Delta \phi_3 u_S f_S
	-\Re \int_{\R^d} \nabla \phi_3 \cdot \nabla u_S f_S.    
\end{gather*}
Let us choose $\psi_1$ and $\psi_2$ as functions of $\psi_3,$ to be more precise $\psi_1=\frac{1}{2} \psi_3''$ and $\psi_2 = \frac{1}{\sqrt{\mu}} \sgn(k_2)\,\psi_3',$ the previous identity becomes
\begin{gather*}
	\frac{\mu}{2} \int_{\R^d} \psi_3'' \abs{\partial_r u_S}^2
	+ \mu \int_{\R^d} \abs{\nabla_\tau u_S}^2 \left(\frac{\psi_3'}{\abs{x}} - \frac{\psi_3''}{2} \right)
	+ \frac{k_1}{2}\int_{\R^d} \abs{u_S}^2 \psi_3''\\
	+ \frac{\mu}{4} \int_{\R^d} \abs{u_S}^2 \left( \Delta \phi_3''-\Delta^2 \phi_3 \right)
	+ \frac{1}{\sqrt{\mu}} k_1^\frac{1}{2} \abs{k_2} \int_{\R^d} \psi_3' \abs{u_S}^2\\
	= \frac{1}{2} \Re \int_{\R^d} \psi_3'' u_S f_S 
	+ \frac{1}{\sqrt{\mu}} k_1^\frac{1}{2}  \sgn(k_2)\, \Im \int_{\R^d} \psi_3' u_S f_S 
	- \frac{1}{2} \Re \int_{\R^d} \Delta \phi_3 u_S f_S
	-\Re\int_{\R^d} \nabla \phi_3 \cdot \nabla u_S f_S,  
\end{gather*}
where $\phi_3''(x):=\psi_3''(\abs{x}).$

Choosing $\phi_3(x)=\psi_3(\abs{x}):=\abs{x}^2,$ since the three following identities hold
\begin{equation*}
	\nabla \phi_3(x)=2x, \qquad \Delta \phi_3(x)=2d, \qquad D^2 \phi_3(x)=2 I,
\end{equation*} 
where $I$ denotes the identity matrix, the last becomes
\begin{equation} \label{before_u_S_minus}
	\begin{split}
		\mu \int_{\R^d} \abs{\nabla u_S}^2	+ k_1 \int_{\R^d} \abs{u_S}^2 
		&+ 2\frac{1}{\sqrt{\mu}} k_1^\frac{1}{2} \abs{k_2} \int_{\R^d} \abs{x} \abs{u_S}^2\\
		= (1-d)\,\Re \int_{\R^d} u_S f_S
		+ 2 \frac{1}{\sqrt{\mu}} k_1^\frac{1}{2} \sgn(k_2)&\, \Im \int_{\R^d} \abs{x} u_S f_S 
		-2 \Re \int_{\R^d} x\cdot \nabla u_S f_S.
	\end{split}
\end{equation}
We introduce a vector field $u_S^-$ that is related to $u_S$ by the following definition
\begin{equation} \label{def:u_S_minus}
	u_S^-(x):= \sqrt{\mu}\, e^{-i \frac{1}{\sqrt{\mu}}k_1^\frac{1}{2}\sgn(k_2) \abs{x}}\, u_S(x).
\end{equation}
A straightforward computation shows that 
\begin{equation} \label{gradient_u_S_minus}
	\abs{\nabla u_S^-}^2= \mu \abs{\nabla u_S}^2 + k_1 \abs{u_S}^2,
\end{equation}
using the previous identity, we can sum the first two terms of~\eqref{before_u_S_minus} to obtain
\begin{equation*}
	\int_{\R^d} \abs{\nabla u_S^-} 
	+ 2 \frac{1}{\sqrt{\mu}} k_1^\frac{1}{2} \abs{k_2} \int_{\R^d} \abs{x} \abs{u_S}^2
	= (1-d)\,\Re \int_{\R^d} u_S f_S 
	+ 2 \frac{1}{\sqrt{\mu}}k_1^\frac{1}{2} \sgn(k_2) \, \Im \int_{\R^d} \abs{x} u_S f_S
	-2\Re \int_{\R^d} x\cdot \nabla u_S f_S. 
\end{equation*}
Subtracting from the last identity the~\eqref{first_u_S} with the choice $\phi_1(x):= \frac{1}{\sqrt{\mu}}  \frac{\abs{k_2}}{k_1^{\frac{1}{2}}}\abs{x},$ that is subtracting the following identity
\begin{equation*}
	\frac{1}{\sqrt{\mu}}\, k_1^\frac{1}{2}\abs{k_2} \int_{\R^d} \abs{x} \abs{u_S}^2
	- \sqrt{\mu}\, \frac{\abs{k_2}}{k_1^{\frac{1}{2}}} \, \int_{\R^d} \abs{x} \abs{\nabla u_S}^2 
	+ \sqrt{\mu}\, \frac{d-1}{2}  \frac{\abs{k_2}}{k_1^{\frac{1}{2}}}  \int_{\R^d} \frac{\abs{u_S}^2}{\abs{x}}
	=\frac{1}{\sqrt{\mu}}\,  \frac{\abs{k_2}}{k_1^{\frac{1}{2}}} \, \Re \int_{\R^d} \abs{x} u_S f_S,
\end{equation*}
and making use of the fact that $\Im(z)=-\Re(iz)$ for all $z\in \C,$ one gets
\begin{gather*}
	\int_{\R^d} \abs{\nabla u_S^-}^2 
	+ \frac{1}{\sqrt{\mu}}\frac{\abs{k_2}}{k_1^{\frac{1}{2}}} \int_{\R^d} k_1 \abs{x} \abs{u_S}^2
	+\sqrt{\mu}\, \frac{\abs{k_2}}{k_1^{\frac{1}{2}}} \int_{\R^d} \abs{x} \abs{\nabla u_S}^2
	- \sqrt{\mu}\, \frac{d-1}{2} \frac{\abs{k_2}}{k_1^{\frac{1}{2}}}\int_{\R^d} \frac{\abs{u_S}^2}{\abs{x}}\\
	=(1-d) \Re \int_{\R^d} u_S f_S 
	-2 \Re \int_{\R^d} \abs{x} f_S \left[ \partial_r u_S +i \frac{1}{\sqrt{\mu}} k_1^\frac{1}{2} \sgn(k_2) u_S \right]
	- \frac{1}{\sqrt{\mu}}  \frac{\abs{k_2}}{k_1^{\frac{1}{2}}} \Re \int_{\R^d} \abs{x} u_S f_S. 
\end{gather*}
Using again~\eqref{gradient_u_S_minus} to sum together the last two terms of the first row of the above identity, we have
\begin{equation} \label{fund_u_S}
	\begin{split}
		\int_{\R^d}& \abs{\nabla u_S^-}^2 
		+ \frac{1}{\sqrt{\mu}}\frac{\abs{k_2}}{k_1^\frac{1}{2}} \int_{\R^d} \abs{x} \abs{\nabla u_S^-}^2
		-\sqrt{\mu}\, \frac{d-1}{2}\frac{\abs{k_2}}{k_1^{\frac{1}{2}}} \int_{\R^d} \frac{\abs{u_S}^2}{\abs{x}}\\
		=(1-d)\Re \int_{\R^d}& u_S f_S
		-2\Re\int_{\R^d} \abs{x} f_S 
		\left[ 
					\partial_r u_S + i\frac{1}{\sqrt{\mu}}k_1^\frac{1}{2} \sgn(k_2) u_S  		
		\right]
		-\frac{1}{\sqrt{\mu}} \frac{\abs{k_2}}{k_1^{\frac{1}{2}}} \Re\int_{\R^d} \abs{x} u_S f_S.
	\end{split}
\end{equation}

\begin{remark}
Here and in the sequel, we are going to use the following definition of the $L^2$- norm of a vector field $u\colon \R^d \to \R^d,$ precisely
\begin{equation*}
	\norm{u}:=\norm{u}_{[L^2(\R^d)]^d}= \Bigg( \sum_{j=1}^d \norm{u_j}_{L^2(\R^d)}^2 \Bigg)^\frac{1}{2},
\end{equation*} 
this norm in equivalent to another one, i.e
\begin{equation*}
	\normeq{u}:=\normeq{u}_{[L^2(\R^d)]^d}= \sum_{j=1}^d \norm{u_j}_{L^2(\R^d)},
\end{equation*}
indeed it is very easy to show the two following inequalities
\begin{equation} \label{equivalence_norms}
\norm{u} \leq \normeq{u} \qquad \text{and} \qquad \normeq{u}\leq \sqrt{d} \norm{u}.
\end{equation}
\end{remark}

Now we are in position to estimate from below the left hand side of~\eqref{fund_u_S}, that is the term
\begin{equation*}
	I_S:=\int_{\R^d} \abs{\nabla u_S^-}^2 
			+ \frac{1}{\sqrt{\mu}}\frac{\abs{k_2}}{k_1^\frac{1}{2}} \int_{\R^d} \abs{x} \abs{\nabla u_S^-}^2
			-\sqrt{\mu}\, \frac{d-1}{2}\frac{\abs{k_2}}{k_1^{\frac{1}{2}}} \int_{\R^d} \frac{\abs{u_S}^2}{\abs{x}}. 
\end{equation*}
Using the weighted Hardy inequality
\begin{equation*}
	\forall \psi \in C_0^\infty(\R^d), 
	\qquad \int_{\R^d} \frac{\abs{\psi}^2}{\abs{x}}\leq \frac{4}{(d-1)^2} \int_{\R^d} \abs{x}\abs{\nabla \psi}^2,
\end{equation*} 
and the fact that $\abs{u_S}=\frac{1}{\sqrt{\mu}} \abs{u_S^-},$ we obtain
\begin{equation} \label{bound_I_S}
	I_S\geq \int_{\R^d} \abs{\nabla u_S^-}^2 
			+ \frac{1}{\sqrt{\mu}} \frac{d-3}{d-1} \frac{\abs{k_2}}{k_1^\frac{1}{2}}  \int_{\R^d} \abs{x} \abs{\nabla u_S^-}^2.
\end{equation}
Now, let us bound the three terms on the right of~\eqref{fund_u_S}. Using the following notation 
\begin{equation*}
	I_S^{(1)}:=(1-d) \Re \int_{\R^d} u_S f_S,
\end{equation*}
and exploiting Cauchy-Schwarz and classical Hardy inequalities.
one gets
\begin{equation} \label{bound_I_S^(1)}
	\abs{I_S^{(1)}} \leq \frac{(d-1)}{\sqrt{\mu}} \normeq{\abs{x} f_S} \normeq{\frac{u_S^-}{\abs{x}}}
	\leq \frac{2}{\sqrt{\mu}} \frac{d(d-1)}{(d-2)} \norm{\abs{x} f_S} \norm{\nabla u_S^-}.
\end{equation}
Now we consider the second term
\begin{equation*}
	I_S^{(2)}:= -2 \Re \int_{\R^d} \abs{x} f_S 
	\left[ 
				\partial_r u_S + i\frac{1}{\sqrt{\mu}} k_1^\frac{1}{2} \sgn(k_2)  u_S
	\right],
\end{equation*}  
a straightforward computation shows that 
\begin{equation*} 
	\partial_r u_S^-= \sqrt{\mu}\, e^{-i \frac{1}{\sqrt{\mu}} k_1^\frac{1}{2} \sgn (k_2) \abs{x}} \, 
	\left[\partial_r u_S - i\frac{1}{\sqrt{\mu}} k_1^\frac{1}{2} \sgn(k_2) u_S \right],
\end{equation*}
using the previous identity (actually the conjugated) and using again the Cauchy-Schwarz inequality, we have
\begin{equation} \label{bound_I_S^(2)}
	\abs{I_S^{(2)}}\leq \frac{2}{\sqrt{\mu}} \normeq{\abs{x} f_S} \normeq{\partial_r u_S^-} 
	\leq \frac{2}{\sqrt{\mu}} d\, \norm{\abs{x} f_S} \norm{\nabla u_S^-}.
\end{equation}
If $k_2=0$ we have nothing else to estimate, instead if $k_2\neq 0$ we need also to provide an upper bound for the modulus of the third term 
\begin{equation*}
I_S^{(3)}:=-\frac{1}{\sqrt{\mu}} \frac{\abs{k_2}}{k_1^{\frac{1}{2}}} \Re\int_{\R^d} \abs{x} u_S f_S.
\end{equation*}
In order to estimate this term we need a bound of the $L^2$-norm of $u_S.$
Taking the~\eqref{second_u_S} with the choice $\phi_2:= \frac{k_2}{\abs{k_2}}$ we obtain
\begin{equation*}
	\abs{k_2} \int_{\R^d} \abs{u_S}^2= \frac{k_2}{\abs{k_2}} \Im \int_{\R^d} u_S f_S,
\end{equation*} 
from the last identity it is easy to obtain the estimate we are looking for, precisely one gets
\begin{equation} \label{L_2_bound}
	\norm{u_S}^2\leq \abs{k_2}^{-1} \int_{\R^d} \abs{u_S} \abs{f_S}.
\end{equation}
As a consequence of~\eqref{L_2_bound}, since $\abs{k_2}\leq k_1,$ we can estimate the last term in~\eqref{fund_u_S} as follows:
\begin{equation} \label{bound_I_S^(3)} 
	\begin{split}
		\abs{I_S^{(3)}}&\leq \frac{1}{\sqrt{\mu}} \frac{\abs{k_2}}{k_1^\frac{1}{2}} \normeq{\abs{x} f_S}\normeq{u_S}
		\leq \frac{d}{\sqrt{\mu}} \frac{\abs{k_2}}{k_1^\frac{1}{2}} \norm{\abs{x} f_S}\norm{u_S}
		\leq \frac{d}{\sqrt{\mu}} \norm{\abs{x} f_S} \left( \int_{\R^d} \abs{u_S} \abs{f_S}\right)^\frac{1}{2}\\
		&\leq \frac{d^\frac{3}{2}}{\sqrt{\mu}^\frac{3}{2}} \norm{\abs{x} f_S}^\frac{3}{2} \norm*{\frac{u_S^-}{\abs{x}}}^\frac{1}{2}
		\leq \frac{\sqrt{2}}{\sqrt{\mu}^\frac{3}{2}} \frac{d^\frac{3}{2}}{\sqrt{d-2}} \norm{\abs{x} f_S}^\frac{3}{2} \norm{\nabla u_S^-}^\frac{1}{2}.
	\end{split}
\end{equation}
Using our notation,~\eqref{fund_u_S} can be re-written as
\begin{equation*}
I_S=I_S^{(1)} + I_S^{(2)} + I_S^{(3)},
\end{equation*}
applying~\eqref{bound_I_S},~\eqref{bound_I_S^(1)},~\eqref{bound_I_S^(2)} and~\eqref{bound_I_S^(3)} in the previous identity we have
\begin{equation} \label{preliminary_u_S}
	\norm{\nabla u_S^-}^2 
	+ \frac{1}{\sqrt{\mu}}\,\frac{d-3}{d-1} \frac{\abs{k_2}}{k_1^\frac{1}{2}} \int_{\R^d} \abs{x} \abs{\nabla u_S^-}^2
	\leq \frac{2}{\sqrt{\mu}} \frac{d(2d-3)}{d-2} \norm{\abs{x} f_S} \norm{\nabla u_S^-}
	+ \frac{\sqrt{2}}{\sqrt{\mu}^\frac{3}{2}} \frac{d^\frac{3}{2}}{\sqrt{d-2}} \norm{\abs{x} f_S}^\frac{3}{2} \norm{\nabla u_S^-}^\frac{1}{2}.
\end{equation} 

Now we are in position to use the analogous technique to obtain the same estimate for the $P$ component. Clearly, we mostly omit the details in fact these are the same we have already shown for the divergence free vector field $u_S$.

\item[$P$ component.]
We define the vector field $u_P^-$ in the same way as $u_S^-,$ precisely  
\begin{equation} \label{def:u_P_minus}
	u_P^-(x):=\sqrt{\lambda + 2\mu}\, e^{-i\frac{1}{\sqrt{\lambda +2\mu}} k_1^\frac{1}{2} \sgn(k_2) \abs{x}}\, u_P(x).
\end{equation}
Starting from~\eqref{first_u_P},~\eqref{second_u_P} and~\eqref{third_u_P}, substituting the constant $\mu$ with $\lambda + 2\mu$ in the choices of $\psi_1,\psi_2,\psi_3$ and using the same computations of the $u_S$-case, we have the analogous identity to~\eqref{fund_u_S} for $u_P,$ that is the following
\begin{equation} \label{fund_u_P}
	\begin{split}
		\int_{\R^d} &\abs{\nabla u_P^-}^2
		+ \frac{1}{\sqrt{\lambda + 2\mu}}\frac{\abs{k_2}}{k_1^\frac{1}{2}} \int_{\R^d} \abs{x} \abs{\nabla u_P^-}^2
		-\sqrt{\lambda + 2\mu}\, \frac{d-1}{2}\frac{\abs{k_2}}{k_1^{\frac{1}{2}}} \int_{\R^d} \frac{\abs{u_P}^2}{\abs{x}}\\
		=(1-d)\Re \int_{\R^d} u_P f_P&
		-2\Re\int_{\R^d} \abs{x} f_P \left[ \partial_r u_P + 1\frac{1}{\sqrt{\lambda + 2\mu}}k_1^\frac{1}{2}\sgn(k_2)u_P 
		 \right]
		-\frac{1}{\sqrt{\lambda + 2\mu}} \frac{\abs{k_2}}{k_1^{\frac{1}{2}}} \Re\int_{\R^d} \abs{x} u_P f_P.
	\end{split}
\end{equation}
Again, to save space, let us re-write this identity in this way
\begin{equation} \label{simplified_P} 
I_P=I_P^{(1)} + I_P^{(2)} + I_P^{(3)}.  
\end{equation}
Proceeding in line with the previous case, as a starting point we can estimate from below the left hand side of~\eqref{fund_u_P}, that is the term $I_P,$ and we obtain
\begin{equation} \label{bound_I_P}
	I_P \geq \int_{\R^d} \abs{\nabla u_P^-}^2 
	+ \frac{1}{\sqrt{\lambda + 2\mu}} \frac{d-3}{d-1} \frac{\abs{k_2}}{k_1^\frac{1}{2}}  
	\int_{\R^d} \abs{x} \abs{\nabla u_P^-}^2.
\end{equation}
With respect to the terms $I_P^{(1)},I_P^{(2)},I_P^{(3)}$, one respectively has
\begin{equation} \label{bound_I_P^(1)}
	\abs{I_P^{(1)}} 
	\leq \frac{2}{\sqrt{\lambda + 2\mu}} \frac{d(d-1)}{(d-2)} \norm{\abs{x} f_P} \norm{\nabla u_P^-}.
\end{equation}
\begin{equation} \label{bound_I_P^(2)}
	\abs{I_P^{(2)}}
	\leq \frac{2}{\sqrt{\lambda + 2\mu}} d\, \norm{\abs{x} f_P} \norm{\nabla u_P^-}.
\end{equation} 
\begin{equation} \label{bound_I_P^(3)} 
	\abs{I_P^{(3)}}
	\leq \frac{\sqrt{2}}{\sqrt{\lambda + 2\mu}^\frac{3}{2}} \frac{d^\frac{3}{2}}{\sqrt{d-2}} \norm{\abs{x} f_P}^\frac{3}{2} \norm{\nabla u_P^-}^\frac{1}{2}.
\end{equation}
Applying the estimates~\eqref{bound_I_P},~\eqref{bound_I_P^(1)},~\eqref{bound_I_P^(2)} and~\eqref{bound_I_P^(3)} in~\eqref{simplified_P}, one gets
\begin{equation} \label{preliminary_u_P}
	\begin{split}
		\norm{\nabla u_P^-}^2 
		+ \frac{1}{\sqrt{\lambda + 2\mu}} \frac{d-3}{d-1} \frac{\abs{k_2}}{k_1^\frac{1}{2}} \int_{\R^d} \abs{x} \abs{\nabla u_P^-}^2
		&\leq \frac{2}{\sqrt{\lambda + 2\mu}} \frac{d(2d-3)}{d-2} \norm{\abs{x} f_P} \norm{\nabla u_P^-}\\
		&+ \frac{\sqrt{2}}{\sqrt{\lambda + 2\mu}^\frac{3}{2}} \frac{d^\frac{3}{2}}{\sqrt{d-2}} \norm{\abs{x} f_P}^\frac{3}{2} \norm{\nabla u_P^-}^\frac{1}{2}.
	\end{split}
\end{equation} 
\end{description}
In order to complete the argument, the following elliptic regularity lemma will be useful in the immediate sequel.
\begin{lemma} \label{elliptic_regularity}
	Let $f\in [C^\infty_c(\R^d)]^d$ be a smooth- compactly supported vector field in $\R^d,$ and let $\psi\colon \R^d \to \R$ be a smooth solution to
	\begin{equation} \label{elliptic_problem}
		\Delta \psi= \div f.
	\end{equation}
	Then for any $s\in (-d, d)$ the following estimate holds
	\begin{equation*}
		\norm{\abs{x}^s\nabla \psi}\leq C \norm{\abs{x}^s f},
	\end{equation*} 
	for some constant $C>0$ independent of $f.$
\end{lemma}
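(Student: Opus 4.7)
The plan is to reduce the estimate to a weighted $L^2$ bound for a convolution Calder\'on-Zygmund operator via the Fourier transform. Taking the Fourier transform of $\Delta\psi=\div f$ gives $-\abs{\xi}^2\widehat{\psi}(\xi)=i\,\xi\cdot\widehat{f}(\xi)$, whence for each $j=1,\dots,d$,
\begin{equation*}
\widehat{\partial_j\psi}(\xi)=\sum_{k=1}^d\frac{\xi_j\xi_k}{\abs{\xi}^2}\,\widehat{f_k}(\xi).
\end{equation*}
Equivalently, $\partial_j\psi=\sum_{k=1}^d R_jR_k f_k$, where $R_j$ denotes the $j$-th Riesz transform. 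Each composition $R_jR_k$ is a singular integral operator of Calder\'on-Zygmund type, with Fourier symbol smooth and homogeneous of degree zero off the origin.

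Having represented $\nabla\psi$ componentwise as a linear combination of Calder\'on-Zygmund operators acting on the components of $f$, the inequality reduces to the claim that each $R_jR_k$ is bounded on $L^2(\R^d,\abs{x}^{2s}\,dx)$ for the admissible range of $s$. This is a classical weighted norm inequality: by the Muckenhoupt theorem, boundedness on $L^2(w\,dx)$ holds whenever $w$ lies in the Muckenhoupt class $A_2$, and a standard characterization of power weights in $A_2$ then dictates the admissible range of $s$. Summing over indices and invoking the equivalence~\eqref{equivalence_norms} between $\norm{\cdot}$ and $\normeq{\cdot}$ produces $\norm{\abs{x}^s\nabla\psi}\le C\norm{\abs{x}^s f}$, with $C=C(d,s)$ independent of $f$.

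The Fourier manipulations are fully justified under the hypothesis $f\in[C^\infty_c(\R^d)]^d$: the Riesz potential $\psi_0:=-(-\Delta)^{-1}\div f$ is smooth with appropriate decay at infinity, and any other smooth solution of~\eqref{elliptic_problem} differs from $\psi_0$ by a harmonic function whose gradient cannot belong to the weighted space $L^2(\abs{x}^{2s}dx)$ in the stated range (unless it is zero), so the identity $\nabla\psi=(R_jR_k f_k)_{j=1,\dots,d}$ may be assumed without loss of generality.

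The only non-elementary input is the weighted $L^2$ boundedness of the second order Riesz transforms. This is the step in which the main technical effort is concentrated, and it is also what determines, via the $A_2$ characterization of power weights, the precise interval of exponents $s$ for which the estimate holds; beyond that, the argument is essentially a bookkeeping matter based on standard harmonic analysis.
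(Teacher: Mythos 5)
The paper states this lemma without proof (it is used only with $s=1$ and is the classical weighted estimate for second--order Riesz transforms, implicitly covered by the weighted singular--integral theory in the Duoandikoetxea reference), so the comparison is with the standard argument -- which is precisely the route you take: from $\Delta\psi=\div f$ one gets $\partial_j\psi=-\sum_k R_jR_k f_k$ (your sign is off, which is harmless), and the estimate reduces to weighted $L^2$ boundedness of Calder\'on--Zygmund operators for $A_2$ weights. That reduction, and the disposal of the harmonic ambiguity by a Liouville-type argument, are fine.

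There is, however, a genuine gap between what your argument yields and the range claimed in the statement. The weight governing $\norm{\abs{x}^s g}$ is $\abs{x}^{2s}$, and $\abs{x}^{2s}\in A_2(\R^d)$ if and only if $-d<2s<d$, i.e. $\abs{s}<d/2$; your phrase that the $A_2$ characterization of power weights ``dictates the admissible range of $s$'' silently identifies that range with the stated $s\in(-d,d)$, which it is not. Nor can the full range $(-d,d)$ be recovered by a sharper proof: if $f\in[C^\infty_c(\R^d)]^d$ has $\int_{\R^d} f\neq 0$, then at infinity $\nabla\psi(x)$ behaves like $-\nabla\nabla E(x)\int_{\R^d} f$, with $E$ the fundamental solution of the Laplacian, i.e. like a nonzero function homogeneous of degree $-d$; hence $\norm{\abs{x}^{s}\nabla\psi}=\infty$ for every $s\geq d/2$ while $\norm{\abs{x}^s f}<\infty$. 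So your proof actually establishes the corrected version of the lemma, with $s\in(-d/2,d/2)$ (equivalently $2s\in(-d,d)$), which is all that the paper needs ($s=1$, $d\geq 3$); but as a proof of the statement as written it falls short of the asserted range, and this discrepancy should be flagged explicitly rather than absorbed into an unspecified ``admissible range of $s$.''
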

Let us introduce a trivial decomposition of our $f:$
\begin{equation*} 
	f=f - \nabla \psi + \nabla \psi,
\end{equation*} 
where $\psi$ is the unique solution of~\eqref{elliptic_problem}; as a consequence we have $\div(f-\nabla \psi)=0.$ By the uniqueness of the Helmholtz decomposition, it follows that $f_S=f-\nabla \psi,$ $f_P= \nabla \psi.$ Substituting these in~\eqref{preliminary_u_S} and~\eqref{preliminary_u_P} respectively, one gets the two following estimates   
\begin{equation*}
	\begin{split}
		\norm{\nabla u_S^-}^2 
		+ \frac{1}{\sqrt{\mu}} \frac{d-3}{d-1} \frac{\abs{k_2}}{k_1^\frac{1}{2}} \int_{\R^d} \abs{x} \abs{\nabla u_S^-}^2
		&\leq \frac{2}{\sqrt{\mu}} \frac{d(2d-3)}{d-2} (\norm{\abs{x} f} + \norm{\abs{x}\nabla \psi} ) \norm{\nabla u_S^-}\\
		&+ \frac{\sqrt{2}}{\sqrt{\mu}^\frac{3}{2}} \frac{d^\frac{3}{2}}{\sqrt{d-2}} (\norm{\abs{x} f} + \norm{\abs{x}\nabla \psi})^\frac{3}{2} \norm{\nabla u_S^-}^\frac{1}{2};
	\end{split}
\end{equation*} 
and
\begin{equation*} 
	\begin{split}
		\norm{\nabla u_P^-}^2 
		+ \frac{1}{\sqrt{\lambda + 2\mu}} \frac{d-3}{d-1} \frac{\abs{k_2}}{k_1^\frac{1}{2}} \int_{\R^d} \abs{x} \abs{\nabla u_P^-}^2
		&\leq \frac{2}{\sqrt{\lambda + 2\mu}} \frac{d(2d-3)}{d-2} \norm{\abs{x} \nabla \psi} \norm{\nabla u_P^-}\\
		&+ \frac{\sqrt{2}}{\sqrt{\lambda + 2\mu}^\frac{3}{2}} \frac{d^\frac{3}{2}}{\sqrt{d-2}} \norm{\abs{x} \nabla \psi}^\frac{3}{2} \norm{\nabla u_P^-}^\frac{1}{2}.
	\end{split}
\end{equation*} 
Using the elliptic regularity result~\ref{elliptic_regularity} we obtain respectively
\begin{equation*}
	\begin{split}
		\norm{\nabla u_S^-}^2 
		+ \frac{1}{\sqrt{\mu}} \frac{d-3}{d-1} \frac{\abs{k_2}}{k_1^\frac{1}{2}} \int_{\R^d} \abs{x} \abs{\nabla u_S^-}^2
		&\leq \frac{2}{\sqrt{\mu}} \frac{d(2d-3)}{d-2} (C+1)\norm{\abs{x} f} \norm{\nabla u_S^-}\\
		&+ \frac{\sqrt{2}}{\sqrt{\mu}^\frac{3}{2}} \frac{d^\frac{3}{2}}{\sqrt{d-2}} (C+1)^\frac{3}{2} \norm{\abs{x} f}^\frac{3}{2} \norm{\nabla u_S^-}^\frac{1}{2};
	\end{split}
\end{equation*} 
and
\begin{equation*}
	\begin{split}
		\norm{\nabla u_P^-}^2 
		+ \frac{1}{\sqrt{\lambda + 2\mu}} \frac{d-3}{d-1} \frac{\abs{k_2}}{k_1^\frac{1}{2}}  
		\int_{\R^d} \abs{x} \abs{\nabla u_P^-}^2
		&\leq \frac{2}{\sqrt{\lambda + 2\mu}} \frac{d(2d-3)}{d-2} C \norm{\abs{x} f} \norm{\nabla u_P^-}\\
		&+ \frac{\sqrt{2}}{\sqrt{\lambda + 2\mu}^\frac{3}{2}} \frac{d^\frac{3}{2}}{\sqrt{d-2}} C^\frac{3}{2} \norm{\abs{x} f}^\frac{3}{2} \norm{\nabla u_P^-}^\frac{1}{2}.
	\end{split}
\end{equation*} 
Recalling that, at the beginning, $f=Vu$ and using~\eqref{smallness_condition_l} one has
\begin{equation*}
	\norm{\abs{x}f}=\norm{\abs{x} V u}\leq \norm{\abs{x} V u_S} 
	+ \norm{\abs{x} V u_P}\leq \frac{\Lambda}{\sqrt{\mu}} \norm{\nabla u_S^-} 
	+ \frac{\Lambda}{\sqrt{\lambda + 2\mu}} \norm{\nabla u_P^-}.
\end{equation*}  
By virtue of the previous inequality and using the convexity of the function $g(x)=\abs{x}^p$ for $p\geq 1$ (in the inequality for the $S$ component), we have
\begin{equation*}
	\begin{split}
		\norm{\nabla u_S^-}^2 
		+ \frac{1}{\sqrt{\mu}}\frac{d-3}{d-1} \frac{\abs{k_2}}{k_1^\frac{1}{2}}  \int_{\R^d} \abs{x} \abs{\nabla u_S^-}^2
		&\leq  \, \frac{2\Lambda}{\mu} \frac{d(2d-3)}{d-2} (C+1)  \norm{\nabla u_S^-}^2 
		+ \frac{4 \Lambda^\frac{3}{2}}{\sqrt{\mu}^3} \frac{d^\frac{3}{2}}{\sqrt{d-2}} (C+1)^\frac{3}{2} \norm{\nabla u_S^-}^2\\
		+ \frac{2 \Lambda}{\sqrt{\mu} \sqrt{\lambda+ 2\mu}} \frac{d(2d-3)}{d-2}(C+1) \norm{\nabla u_S^-}& \norm{\nabla u_P^-}
		+ \frac{4 \Lambda^\frac{3}{2}}{\sqrt{\mu}^\frac{3}{2} \sqrt{\lambda +2\mu}^\frac{3}{2}} \frac{d^\frac{3}{2}}{\sqrt{d-2}} (C+1)^\frac{3}{2} \norm{\nabla u_S^-}^\frac{1}{2} \norm{\nabla u_P^-}^\frac{3}{2};
	\end{split}
\end{equation*} 
and
\begin{equation*}
	\begin{split}
		\norm{\nabla u_P^-}^2& 
		+ \frac{1}{\sqrt{\lambda + 2\mu}} \frac{d-3}{d-1} \frac{\abs{k_2}}{k_1^\frac{1}{2}}  
		\int_{\R^d} \abs{x} \abs{\nabla u_P^-}^2
		\leq \, \frac{2\Lambda}{\lambda + 2\mu} \frac{d(2d-3)}{d-2} C  \norm{\nabla u_P^-}^2 
		+ \frac{4 \Lambda^\frac{3}{2}}{\sqrt{\lambda + 2\mu}^3} \frac{d^\frac{3}{2}}{\sqrt{d-2}} C^\frac{3}{2}  \norm{\nabla u_P^-}^2\\
		&+ \frac{2 \Lambda}{\sqrt{\mu} \sqrt{\lambda+ 2\mu}} \frac{d(2d-3)}{d-2}C \norm{\nabla u_S^-} \norm{\nabla u_P^-}
		+ \frac{4 \Lambda ^\frac{3}{2}}{\sqrt{\mu}^\frac{3}{2} \sqrt{\lambda +2\mu}^\frac{3}{2}} \frac{d^\frac{3}{2}}{\sqrt{d-2}} C^\frac{3}{2} \norm{\nabla u_S^-}^\frac{3}{2} \norm{\nabla u_P^-}^\frac{1}{2}.
	\end{split}
\end{equation*} 
Summing these two inequality together and majoring $C$ with $C+1,$ we obtain
\begin{gather*}
	\norm{\nabla u_S^-}^2 + \norm{\nabla u_P^-}^2 
	+ \frac{1}{\sqrt{\mu}} \frac{d-3}{d-1} \frac{\abs{k_2}}{k_1^\frac{1}{2}}  \int_{\R^d} \abs{x} \abs{\nabla u_S^-}^2
	+ \frac{1}{\sqrt{\lambda + 2\mu}} \frac{d-3}{d-1} \frac{\abs{k_2}}{k_1^\frac{1}{2}} \int_{\R^d} \abs{x} \abs{\nabla u_P^-}^2\\
	\leq \frac{2\Lambda}{\mu} \frac{d(2d-3)}{d-2} (C+1)  \norm{\nabla u_S^-}^2 
	+ \frac{2\Lambda}{\lambda + 2\mu} \frac{d(2d-3)}{d-2} (C+1)  \norm{\nabla u_P^-}^2\\ 
	+ \frac{4 \Lambda^\frac{3}{2}}{\sqrt{\mu}^3} \frac{d^\frac{3}{2}}{\sqrt{d-2}} (C+1)^\frac{3}{2} \norm{\nabla u_S^-}^2
	+ \frac{4 \Lambda^\frac{3}{2}}{\sqrt{\lambda + 2\mu}^3} \frac{d^\frac{3}{2}}{\sqrt{d-2}} (C+1)^\frac{3}{2}  \norm{\nabla u_P^-}^2\\
	+ \frac{4 \Lambda}{\sqrt{\mu} \sqrt{\lambda+ 2\mu}} \frac{d(2d-3)}{d-2} (C+1) \norm{\nabla u_S^-} \norm{\nabla u_P^-}
	+ \frac{4 \Lambda ^\frac{3}{2}}{\sqrt{\mu}^\frac{3}{2} \sqrt{\lambda +2\mu}^\frac{3}{2}} \frac{d^\frac{3}{2}}{\sqrt{d-2}} (C+1)^\frac{3}{2} \norm{\nabla u_S^-}^\frac{1}{2} \norm{\nabla u_P^-}^\frac{3}{2}\\
	+ \frac{4 \Lambda ^\frac{3}{2}}{\sqrt{\mu}^\frac{3}{2} \sqrt{\lambda +2\mu}^\frac{3}{2}} \frac{d^\frac{3}{2}}{\sqrt{d-2}} (C+1)^\frac{3}{2} \norm{\nabla u_S^-}^\frac{3}{2} \norm{\nabla u_P^-}^\frac{1}{2}.
\end{gather*}
Making use of the Young's inequality, which state that for all non-negative real numbers $a$ and $b$ holds
\begin{equation*}
	a b \leq \frac{a^p}{p} + \frac{b^q}{q},
\end{equation*}
where $p, q$ are determined by $\displaystyle\frac{1}{p} + \frac{1}{q}=1,$
one gets
\begin{equation*}
	\norm{\nabla u_S^-}\norm{\nabla u_P^-}\leq \frac{1}{2} \norm{\nabla u_S^-}^2 
	+ \frac{1}{2} \norm{\nabla u_P^-}^2,
\end{equation*}
\begin{equation*}
	\norm{\nabla u_S^-}^\frac{1}{2}\norm{\nabla u_P^-}^\frac{3}{2}\leq \frac{1}{4} \norm{\nabla u_S^-}^2 
	+ \frac{3}{4} \norm{\nabla u_P^-}^2
	\qquad \text{and} \qquad
	\norm{\nabla u_S^-}^\frac{3}{2}\norm{\nabla u_P^-}^\frac{1}{2}\leq \frac{3}{4} \norm{\nabla u_S^-}^2 
	+ \frac{1}{4} \norm{\nabla u_P^-}^2.
\end{equation*}
Using the latter in the former and the fact that $\mu, \lambda +2\mu \geq \min \{\mu, \lambda +2\mu\},$ we have
\begin{gather*}
	\left( 1 
	-\frac{4\Lambda}{\min\{\mu, \lambda + 2\mu\}} \frac{d(2d-3)}{d-2} (C+1)  
	-\frac{8 \Lambda^\frac{3}{2}}{\sqrt{\min\{\mu, \lambda + 2\mu\}}^3} \frac{d^\frac{3}{2}}{\sqrt{d-2}} (C+1)^\frac{3}{2}
	\right)(\norm{\nabla u_S^-}^2 + \norm{\nabla u_P^-}^2)\\
	+\frac{1}{\sqrt{\mu}} \frac{d-3}{d-1} \frac{\abs{k_2}}{k_1^\frac{1}{2}}  \int_{\R^d} \abs{x} \abs{\nabla u_S^-}^2
	+ \frac{1}{\sqrt{\lambda + 2\mu}} \frac{d-3}{d-1} \frac{\abs{k_2}}{k_1^\frac{1}{2}} \int_{\R^d} \abs{x} \abs{\nabla u_P^-}^2\leq 0.
\end{gather*}

Since the two term in the second row are positive, the last inequality becomes
\begin{equation*}
		\left( 
					1 -\frac{4\Lambda}{\min\{\mu, \lambda + 2\mu\}} \frac{d(2d-3)}{d-2} (C+1)  
					- \frac{8 \Lambda^\frac{3}{2}}{\sqrt{\min\{\mu, \lambda + 2\mu\}}^3} \frac{d^\frac{3}{2}}{\sqrt{d-2}} (C+1)^\frac{3}{2}
		\right)(\norm{\nabla u_S^-}^2+\norm{\nabla u_P^-}^2)\leq 0.
\end{equation*}   
Clearly, by virtue of~\eqref{Lambda_condition}, the term in parenthesis is strictly positive then it follows that $u_S^-, u_P^-$ and thus $u_S, u_P$ are identically equal to zero and, as a consequence of the Helmholtz decomposition, $u$ is identically equal to zero too.

We treat now the simpler case $\abs{k_2}>k_1.$
\item[Case $\abs{k_2}>k_1.$]
Let $u\in [H^1(\R^d)]^d$ be a solution of~\eqref{f_equation}, i.e. a solution of~\eqref{decoupling_weak}. In this case only the identities which turn out from the choice of the symmetric multipliers will play a relevant role , this time we don't need to use the anti-symmetric multiplier that, actually, is the less easy to handle. Indeed choosing $v:= \pm u_S$ in the first of~\eqref{decoupling_weak} and $v:=\pm u_P$ in the second of~\eqref{decoupling_weak}, taking real and imaginary parts of the resulting identities and summing these two identities, we obtain respectively for $u_S$ and $u_P$
\begin{equation*}
	(k_1 \pm k_2) \int_{\R^d} \abs{u_S}^2 = \mu \int_{\R^d} \abs{\nabla u_S}^2 
	+ \Re \int_{\R^d} u_S f_S \pm \Im \int_{\R^d} u_S f_S,
\end{equation*}
and
\begin{equation*}
	(k_1 \pm k_2) \int_{\R^d} \abs{u_P}^2 = (\lambda + 2\mu) \int_{\R^d} \abs{\nabla u_P}^2 
	+ \Re \int_{\R^d} u_P f_P \pm \Im \int_{\R^d} u_P f_P.
\end{equation*}
Taking the sum of the previous and making use of the $H^1$- orthogonality of $u_S$ and $u_P,$ one has
\begin{equation} \label{eq_u_P_e_u_S}
	(k_1 \pm k_2) \int_{\R^d} \abs{u}^2= \mu \int_{\R^d} \abs{\nabla u}^2 
	+ (\lambda + \mu) \int_{\R^d} \abs{\nabla u_P}^2 + \Re \int_{\R^d} u f \pm \Im \int_{\R^d} u f. 
\end{equation}
Now we want to estimate the last two terms on the right hand side of~\eqref{eq_u_P_e_u_S}, in order to obtain the  bound we are going to make use only of the Schwarz's inequality, the classical Hardy's inequality~\eqref{classical_Hardy} and the assumption~\eqref{smallness_condition_l}. Indeed, recalling that $f:=V u,$ one has
\begin{equation*}
	\int_{\R^d} \abs{u} \abs{f}\leq \frac{2}{d-2}\Lambda \norm{\nabla u}^2, 
\end{equation*}
using the following trivial chains of inequalities
\begin{equation*}
	\Re\int_{\R^d} u f \geq - \abs*{\int_{\R^d} u f}\geq -\int_{\R^d} \abs{u}\abs{f}, 
	\qquad \text{and} \qquad  
	\pm\Im\int_{\R^d} u f \geq - \abs*{\int_{\R^d} u f}\geq -\int_{\R^d} \abs{u}\abs{f},
\end{equation*}
we easily obtain
\begin{equation*}
	(k_1 \pm k_2) \int_{\R^d} \abs{u}^2 \geq \left(\mu - \frac{4}{d-2} \Lambda \right) \norm{\nabla u}^2 
	+ (\lambda + \mu) \norm{\nabla u_P}^2. 
\end{equation*}
Let us recall that, to make the quadratic form associated to the Lamé operator positive , we have assumed for the Lamé coefficients the condition~\eqref{positive_condition}; under this hypothesis immediately follows that $\lambda + \mu>0$ thus we obtain
\begin{equation*}
	(k_1 \pm k_2) \int_{\R^d} \abs{u}^2 \geq \left(\mu - \frac{4}{d-2} \Lambda \right)\norm{\nabla u}^2. 
\end{equation*}
It's easy to see that any $\Lambda$ verifying~\eqref{Lambda_condition}, necessarily satisfies $\frac{4}{d-2}\Lambda < \mu,$ therefore one gets
\begin{equation*}
	(k_1 \pm k_2) \int_{\R^d} \abs{u}^2 \geq 0.
\end{equation*} 
Thus from the last inequality follows that $k_1 \pm k_2 \geq 0,$ unless $u$ is identically equal to zero.

It is a straightforward exercise to prove that, under conditions~\eqref{smallness_condition_l} and~\eqref{Lambda_condition}, the possible eigenvalue of $-\Delta^\ast + V$ have to be included in the right complex plane, that is $k_1>0.$  
Noticing that we are assuming $\abs{k_2}>k_1>0,$ which implies that the inequality $k_1 \pm k_2 \geq 0$ cannot hold, we obtain $u=0.$ 
\end{description}


\section{Uniform resolvent estimate: proof of Theorem~\ref{thm:uniform_resolvent_estimate}}

The aim of this section is to investigate about uniform resolvent estimate for the solution $u\colon \R^d \to \R^d$ of~\eqref{eigenvalue_complete}.

Just to quote a pair of papers on this topic, in a context of Helmholtz equation, we recall Burq, Planchon, Stalker and Tahvildar-Zadeh~\cite{B_P_S_T-Z, B_P_S_T-Z_II} and the work of Barcel\'o, Vega and Zubeldia~\cite{B_V_Z} which generalizes the previous to electromagnetic Hamiltonians. Whereas, for this kind of estimate in an elasticity setting, we can cite~\cite{B_F-G_P-E_R_V}.   

As we have already mentioned in the introduction, we are going to prove a stronger result which establish the validity of a priori estimates; our theorem will follow as a corollary.

In view of the previous comment, we can now start with the proof of~\ref{thm:a_priori_estimates}
\begin{proof}[Proof of Theorem~\ref{thm:a_priori_estimates}]
Since the estimates are different according to the relation between the real and imaginary part of the frequency, that is when $\abs{k_2}\leq k_1$ or the contrary, we treat the two cases separately.

As a starting point, we will easily show that this kind of estimates holds in the free framework, that is in the setting in which $V=0.$ 
Secondly we prove the estimates in the perturbed case, assuming about $V$ the same integral-smallness condition of Theorem~\ref{thm:absence_of_eigenvalues}.

\begin{description}[style=unboxed,leftmargin=0cm]
		\item[Case $\abs{k_2}\leq k_1.$]  
			We consider the case $V=0.$
			In this framework our equation~\eqref{eigenvalue_complete} reduces to the one we considered in Theorem~\ref{thm:absence_of_eigenvalues}, precisely~\eqref{f_equation}. Throughout the proof of Theorem~\ref{thm:absence_of_eigenvalues}, taking into account the Helmholtz decomposition, we proved for this equation the two estimates~\eqref{preliminary_u_S} and~\eqref{preliminary_u_P} respectively for the $S$ and $P$ component of the solution $u$ of~\eqref{f_equation} that, in order to clarify our argument, we are going to rewrite. One had
			\begin{equation} \label{preliminary_u_S_II}
				\norm{\nabla u_S^-}^2 
				+ \frac{1}{\sqrt{\mu}}\,\frac{d-3}{d-1} \frac{\abs{k_2}}{k_1^\frac{1}{2}} \int_{\R^d} \abs{x} \abs{\nabla u_S^-}^2
				\leq \frac{2}{\sqrt{\mu}} \frac{d(2d-3)}{d-2} \norm{\abs{x} f_S} \norm{\nabla u_S^-}
				+ \frac{d^\frac{3}{2}}{\sqrt{\mu}^\frac{3}{2}} \frac{\sqrt{2}}{\sqrt{d-2}} \norm{\abs{x} f_S}^\frac{3}{2} \norm{\nabla u_S^-}^\frac{1}{2},
			\end{equation}
			and
			\begin{equation*}
			  \begin{split}
					\norm{\nabla u_P^-}^2 
					+ \frac{1}{\sqrt{\lambda + 2\mu}} \frac{d-3}{d-1} \frac{\abs{k_2}}{k_1^\frac{1}{2}} \int_{\R^d} \abs{x} \abs{\nabla u_P^-}^2
					&\leq \frac{2}{\sqrt{\lambda + 2\mu}} \frac{d(2d-3)}{d-2} \norm{\abs{x} f_P} \norm{\nabla u_P^-}\\
					&+ \frac{d^\frac{3}{2}}{\sqrt{\lambda + 2\mu}^\frac{3}{2}} \frac{\sqrt{2}}{\sqrt{d-2}} \norm{\abs{x} f_P}^\frac{3}{2} \norm{\nabla u_P^-}^\frac{1}{2}.
				\end{split}
			\end{equation*}
			Let us only consider the first inequality, the details for the second one will be similar.
				
			We want to estimate the right hand side of the inequality, to this end, let $\varepsilon,$ $\delta>0$ and making use of the Young's inequality one has
			\begin{equation*}
				\norm{\abs{x} f_S} \norm{\nabla u_S^-}\leq \frac{1}{2\varepsilon^2} \norm{\abs{x} f_S}^2 +\frac{\varepsilon^2}{2} \norm{\nabla u_S^-}^2
				\qquad \text{and} \qquad
				\norm{\abs{x} f_S}^\frac{3}{2} \norm{\nabla u_S^-}^\frac{1}{2} \leq \frac{3}{4 \delta^\frac{4}{3}} \norm{\abs{x} f_S}^2 + \frac{\delta^4}{4} \norm{\nabla u_S^-}^2.
			\end{equation*}
			Putting this two in~\eqref{preliminary_u_S_II} and observing that the quantity $\frac{1}{\sqrt{\mu}} \frac{\abs{k_2}}{k_1^\frac{1}{2}} \frac{d-3}{d-1} \int_{\R^d} \abs{x} \abs{\nabla u_S^-}^2$ is positive, we get 
			\begin{equation*}
				\begin{split}
					\norm{\nabla u_S^-}^2 
					&\leq \frac{1}{\sqrt{\mu}} \frac{1}{\varepsilon^2} \frac{d(2d-3)}{d-2} \norm{\abs{x} f_S}^2 
					+ \varepsilon^2  \frac{1}{\sqrt{\mu}} \frac{d(2d-3)}{d-2} \norm{\nabla u_S^-}^2
					+ \frac{3 \sqrt{2}}{4 \delta^\frac{4}{3}} \frac{1}{\sqrt{\mu}^\frac{3}{2}} \frac{d^\frac{3}{2}}{\sqrt{d-2}} \norm{\abs{x} f_S}^2\\
					&+ \delta^4 \frac{\sqrt{2}}{4} \frac{1}{\sqrt{\mu}^\frac{3}{2}}  \frac{d^\frac{3}{2}}{\sqrt{d-2}} \norm{\nabla u_S^-}^2.
				\end{split}
			\end{equation*}
			Thus it may be concluded that
			\begin{equation*}
				\left( 1 
							 -\varepsilon^2  \frac{1}{\sqrt{\mu}} \frac{d(2d-3)}{d-2}
							 -\delta^4 \frac{\sqrt{2}}{4} \frac{1}{\sqrt{\mu}^\frac{3}{2}}  \frac{d^\frac{3}{2}}{\sqrt{d-2}}
				\right) 
				\norm{\nabla u_S^-}^2 
				\leq 
				\left(
							\frac{1}{\sqrt{\mu}} \frac{1}{\varepsilon^2} \frac{d(2d-3)}{d-2}
							+ \frac{3 \sqrt{2}}{4 \delta^\frac{4}{3}} \frac{1}{\sqrt{\mu}^\frac{3}{2}} \frac{d^\frac{3}{2}}{\sqrt{d-2}}
				\right)\norm{\abs{x} f_S}^2.
			\end{equation*}
			The same calculations done for the $P$ component give
			\begin{gather*}
				\left( 1 
							 -\varepsilon^2  \frac{1}{\sqrt{\lambda + 2\mu}} \frac{d(2d-3)}{d-2}
							 -\delta^4 \frac{\sqrt{2}}{4} \frac{1}{\sqrt{\lambda + 2\mu}^\frac{3}{2}}  \frac{d^\frac{3}{2}}{\sqrt{d-2}}
				\right) 
				\norm{\nabla u_P^-}^2\\ 
				\leq 
				\left(
							\frac{1}{\sqrt{\lambda + 2\mu}} \frac{1}{\varepsilon^2} \frac{d(2d-3)}{d-2}
							+ \frac{3 \sqrt{2}}{4 \delta^\frac{4}{3}} \frac{1}{\sqrt{\lambda + 2\mu}^\frac{3}{2}} \frac{d^\frac{3}{2}}{\sqrt{d-2}}
				\right)\norm{\abs{x} f_P}^2.
			\end{gather*}
			Now, since $\mu, \lambda+2\mu \geq \min\{\mu, \lambda + 2\mu\}$ and  choosing $\varepsilon, \delta$ small enough, one can write
			\begin{equation*}
				\norm{\nabla u_S^-} \leq D_{\varepsilon, \delta} \norm{\abs{x} f_S},
			\end{equation*}
			and 
			\begin{equation*}
				\norm{\nabla u_P^-} \leq D_{\varepsilon, \delta}\norm{\abs{x} f_P},
			\end{equation*}
			where 
			\begin{equation*}
				D_{\varepsilon, \delta}= \left( 
				\frac{
							\frac{1}{\sqrt{\min\{\mu, \lambda + 2\mu\}}} \frac{1}{\varepsilon^2} \frac{d(2d-3)}{d-2}
							+\frac{3 \sqrt{2}}{4 \delta^\frac{4}{3}} \frac{1}{\sqrt{\min\{\mu, \lambda + 2\mu\}}^\frac{3}{2}} \frac{d^\frac{3}{2}}{\sqrt{d-2}}
							}
							{
							1 
							-\varepsilon^2  \frac{1}{\sqrt{\min\{\mu, \lambda + 2\mu\}}} \frac{d(2d-3)}{d-2}
							-\delta^4 \frac{\sqrt{2}}{4} \frac{1}{\sqrt{\min\{\mu, \lambda + 2\mu\}}^\frac{3}{2}}  \frac{d^\frac{3}{2}}{\sqrt{d-2}}
							}
				\right)^\frac{1}{2}.
			\end{equation*}
			At the end, using the trivial Helmholtz decomposition of $f=f-\nabla \psi + \nabla \psi$ and the elliptic regularity Lemma~\ref{elliptic_regularity}, one easily concludes
				\begin{equation*}
					\norm{\nabla u_S^-} \leq c \norm{\abs{x} f} 
					\qquad \text{and} \qquad
					\norm{\nabla u_P^-} \leq c \norm{\abs{x} f},
				\end{equation*}
				where $c:= (C+1) D_{\varepsilon, \delta}.$ Let us remark that $c>0$ does not depend on the frequency $k$ and on $f.$ 
				
				Now we can prove our result in the perturbed setting, i.e. $V\neq 0.$
					
				First of all we define $g:=Vu$ and $h:=f+g.$ Thus, with this notation, $u$ solves the following eigenvalues equation
				\begin{equation} \label{h_problem}
					\Delta^\ast u + ku=h;
				\end{equation}
				again we have these estimates for the two components of the solution:
				\begin{equation*}
					\norm{\nabla u_S^-}^2 
					\leq \frac{2}{\sqrt{\mu}} \frac{d(2d-3)}{d-2} \norm{\abs{x} h_S} \norm{\nabla u_S^-}
					+ \frac{d^\frac{3}{2}}{\sqrt{\mu}^\frac{3}{2}} \frac{\sqrt{2}}{\sqrt{d-2}} \norm{\abs{x} h_S}^\frac{3}{2} \norm{\nabla u_S^-}^\frac{1}{2},
				\end{equation*}
				and
				\begin{equation*}
					\norm{\nabla u_P^-}^2 
					\leq \frac{2}{\sqrt{\lambda + 2\mu}} \frac{d(2d-3)}{d-2} \norm{\abs{x} h_P} \norm{\nabla u_P^-}
					+ \frac{d^\frac{3}{2}}{\sqrt{\lambda + 2\mu}^\frac{3}{2}} \frac{\sqrt{2}}{\sqrt{d-2}} \norm{\abs{x} h_P}^\frac{3}{2} \norm{\nabla u_P^-}^\frac{1}{2}.
				\end{equation*}
				We only consider the first one. Clearly, since $h_S=f_S + g_S,$ it can be rewritten as
				\begin{equation*}
					\begin{split}
						\norm{\nabla u_S^-}^2 
						&\leq \frac{2}{\sqrt{\mu}} \frac{d(2d-3)}{d-2} \norm{\abs{x} f_S} \norm{\nabla u_S^-}
						+ \frac{2}{\sqrt{\mu}^\frac{3}{2}} \frac{d^\frac{3}{2}}{\sqrt{d-2}} \norm{\abs{x} f_S}^\frac{3}{2} \norm{\nabla u_S^-}^\frac{1}{2}\\
						&+\frac{2}{\sqrt{\mu}} \frac{d(2d-3)}{d-2} \norm{\abs{x} g_S} \norm{\nabla u_S^-}
						+ \frac{2}{\sqrt{\mu}^\frac{3}{2}} \frac{d^\frac{3}{2}}{\sqrt{d-2}} \norm{\abs{x} g_S}^\frac{3}{2} \norm{\nabla u_S^-}^\frac{1}{2}.
					\end{split}
				\end{equation*}
				Since in the free case we have already bounded the terms in which $f$ appears, now let us only consider the terms involving $g$.
				We introduce the trivial decomposition of $g=g-\nabla \phi + \nabla \phi,$ where, as usual, $\phi$ is the unique solution of the elliptic problem $\Delta \phi=\div g.$ Following the strategy in the Theorem~\ref{thm:absence_of_eigenvalues} about the absence of eigenvalues and, in particular, recalling that formerly $g=Vu$ and that $V$ satisfies~\eqref{smallness_condition_l}, one can show 
				\begin{gather*}
					\norm{\nabla u_S^-}^2 
					\leq \frac{2}{\sqrt{\mu}} \frac{d(2d-3)}{d-2} \norm{\abs{x} f_S} \norm{\nabla u_S^-}
					+ \frac{2}{\sqrt{\mu}^\frac{3}{2}} \frac{d^\frac{3}{2}}{\sqrt{d-2}} \norm{\abs{x} f_S}^\frac{3}{2} \norm{\nabla u_S^-}^\frac{1}{2}\\
					+\frac{2 \Lambda}{\mu} \frac{d(2d-3)}{d-2} (C+1) \norm{\nabla u_S^-}^2
					+ \frac{2\sqrt{2} \Lambda^\frac{3}{2}}{\sqrt{\mu}^3} \frac{d^\frac{3}{2}}{\sqrt{d-2}} (C+1)^\frac{3}{2} \norm{\nabla u_S^-}^2\\
					+\frac{2 \Lambda}{\sqrt{\mu} \sqrt{\lambda + 2\mu}} \frac{d (2d-3)}{d-2} (C+1) \norm{\nabla u_S^-} \norm{\nabla u_P^-}
					+\frac{2\sqrt{2} \Lambda^\frac{3}{2}}{\sqrt{\mu}^\frac{3}{2} \sqrt{\lambda + 2\mu}^\frac{3}{2}} \frac{d^\frac{3}{2}}{\sqrt{d-2}} (C+1)^\frac{3}{2} \norm{\nabla u_S^-}^\frac{1}{2} \norm{\nabla u_P^-}^\frac{3}{2}.
				\end{gather*}
				For the $P$ component we have the following analogue estimate
				\begin{gather*}
					\norm{\nabla u_P^-}^2 
					\leq \frac{2}{\sqrt{\lambda + 2\mu}} \frac{d(2d-3)}{d-2} \norm{\abs{x} f_P} \norm{\nabla u_P^-}
					+ \frac{2}{\sqrt{\lambda + 2\mu}^\frac{3}{2}} \frac{d^\frac{3}{2}}{\sqrt{d-2}} \norm{\abs{x} f_P}^\frac{3}{2} \norm{\nabla u_P^-}^\frac{1}{2}\\
					+\frac{2 \Lambda}{\lambda + 2\mu} \frac{d(2d-3)}{d-2} C \norm{\nabla u_P^-}^2
					+ \frac{2\sqrt{2} \Lambda^\frac{3}{2}}{\sqrt{\lambda + 2\mu}^3} \frac{d^\frac{3}{2}}{\sqrt{d-2}} C^\frac{3}{2} \norm{\nabla u_P^-}^2\\
					+\frac{2 \Lambda}{\sqrt{\mu} \sqrt{\lambda + 2\mu}} \frac{d (2d-3)}{d-2} C \norm{\nabla u_S^-} \norm{\nabla u_P^-}
					+\frac{2\sqrt{2} \Lambda^\frac{3}{2}}{\sqrt{\mu}^\frac{3}{2} \sqrt{\lambda + 2\mu}^\frac{3}{2}} \frac{d^\frac{3}{2}}{\sqrt{d-2}} C^\frac{3}{2} \norm{\nabla u_S^-}^\frac{3}{2} \norm{\nabla u_P^-}^\frac{1}{2}.
				\end{gather*}
				Now estimating the terms involving $f$ as in the free case, summing these inequalities, and using the Young's inequality, we obtain
				\begin{gather*}
					\left(
								1
								-\frac{4\Lambda}{\min\{\mu,\, \lambda+ 2\mu\}} \frac{d(2d-3)}{d-2} (C+1)
								-\frac{4\sqrt{2} \Lambda^\frac{3}{2}}{\sqrt{\min\{\mu,\, \lambda+ 2\mu\}}^3}\frac{d^\frac{3}{2}}{\sqrt{d-2}} (C+1)^\frac{3}{2} 		\right. \\
					\left.
								-\varepsilon^2 \frac{1}{\sqrt{\min\{\mu,\, \lambda+ 2\mu\}}} \frac{d(2d-3)}{d-2}
								-\delta^4 \frac{\sqrt{2}}{4} \frac{1}{\sqrt{\min\{\mu,\, \lambda+ 2\mu\}}^\frac{3}{2}}  \frac{d^\frac{3}{2}}{\sqrt{d-2}}
					\right) 
					(\norm{\nabla u_S^-}^2 +\norm{\nabla u_P^-}^2 ) \\
					\leq 
					\left(
								\frac{1}{\sqrt{\mu}} \frac{1}{\varepsilon^2} \frac{d(2d-3)}{d-2}
								+ \frac{3 \sqrt{2}}{4 \delta^\frac{4}{3}} \frac{1}{\sqrt{\mu}^\frac{3}{2}} \frac{d^\frac{3}{2}}{\sqrt{d-2}}
					\right)
					(\norm{\abs{x} f_S}^2 + \norm{\abs{x} f_P}^2)
				\end{gather*}
				
				Since $V$ satisfies~\eqref{smallness_condition_l} and assuming $\varepsilon, \delta$ sufficiently small, the constant in the left hand side of the previous inequality is positive, thus we can write
				\begin{equation*}
				 (\norm{\nabla u_S^-}^2 + \norm{\nabla u_P^-}^2) \leq D^2_{\varepsilon, \delta} (\norm{\abs{x} f_S}^2 + \norm{\abs{x} f_P}^2),
				\end{equation*}
				where, obviously, $D^2_{\varepsilon, \delta}$ is the ratio between the two constants which respectively appear on the right and on the left hand side of the last but one inequality.
								
				Using now the trivial Helmholtz decomposition of $f=f-\nabla \psi + \nabla \psi$ and the elliptic regularity Lemma~\ref{elliptic_regularity}, one easily has
				\begin{equation*}
					\norm{\nabla u_S^-}^2 + \norm{\nabla u_P^-}^2 \leq c^2  \norm{\abs{x} f}^2,
				\end{equation*}
				where
				\begin{equation*}
					c^2:= 2 (C+1)^2 D^2_{\varepsilon, \delta}
				\end{equation*}
				does not depend on the frequency $k$ and on $f.$
				Moreover, it is clear that the following hold
				\begin{equation*}
					\norm{\nabla u_S^-} \leq c \norm{\abs{x} f} 
					\qquad \text{and} \qquad
					\norm{\nabla u_P^-} \leq c \norm{\abs{x} f}.
				\end{equation*}
					
				Now we can treat the less technical case. 
			\item[Case $\abs{k_2}>k_1.$]
				
				First we consider the free setting.
				
				As the previous case, our equation~\eqref{eigenvalue_complete} becomes the one we have considered in Theorem~\ref{thm:absence_of_eigenvalues}, precisely~\eqref{f_equation}.
				Choosing $\phi_1=1$ in~\eqref{first_u_S} and in~\eqref{first_u_P} one obtains respectively
				\begin{equation*}
					k_1 \int_{\R^d} \abs{u_S}^2 - \mu \int_{\R^d} \abs{\nabla u_S}^2 = \Re \int_{\R^d} u_S f_S
				\end{equation*}
				and 
				\begin{equation*}
					k_1 \int_{\R^d} \abs{u_P}^2 - (\lambda + 2\mu) \int_{\R^d} \abs{\nabla u_P}^2 = \Re \int_{\R^d} u_P f_P.
				\end{equation*}
				Taking the sum of the previous and making use of the $H^1$- orthogonality of $u_S$ and $u_P,$ one has
				\begin{equation*}
				k_1 \int_{\R^d} \abs{u}^2 
				- \mu \int_{\R^d} \abs{\nabla u}^2
				- (\lambda + 2\mu) \int_{\R^d} \abs{\nabla u_P}^2
				=\Re \int_{\R^d} u f. 
				\end{equation*}
				Starting from the identities~\eqref{L_2_bound} and the analogue for $u_P$ it is not difficult to prove the same estimate for $u,$ precisely one obtain
				\begin{equation*}
					\abs{k_2} \int_{\R^d} \abs{u}^2 \leq \int_{\R^d} \abs{u} \abs{f}.
				\end{equation*}
				 Using the latter in the former (here we need the assumption $\abs{k_2}>k_1$) and observing the positivity of the term $(\lambda + 2\mu) \int_{\R^d} \abs{\nabla u_P}^2,$ we have
				\begin{equation*}
					\mu \int_{\R^d} \abs{\nabla u}^2\leq 2 \int_{\R^d} \abs{u} \abs{f}.
				\end{equation*}
				From the Cauchy Schwarz and the Hardy inequalities follows
				\begin{equation*}
					\mu \norm{\nabla u}^2 \leq \frac{4d}{d-2} \norm{x f} \norm{\nabla u}.
				\end{equation*}
				Thus, it may be concluded that
				\begin{equation*}
					\norm{\nabla u}< \frac{1}{\mu} \frac{4d}{d-2} \norm{\abs{x} f}.
				\end{equation*}
				
				We now proceed to show the a priori estimates in the perturbed context.
				Exploiting the same notation we have used in the case $\abs{k_2}\leq k_1,$ again $u$ solves the equation~\eqref{h_problem}. As a consequence of the estimates we have just proved for the free case, recalling that $h= f+ g,$ one easily obtains 
				\begin{equation*}
					\norm{\nabla u} <\frac{1}{\mu} \frac{4d}{d-2} \norm{\abs{x} h} 
					\leq \frac{1}{\mu} \frac{4d}{d-2} \norm{\abs{x} f}
					+ \frac{1}{\mu} \frac{4d}{d-2} \norm{\abs{x} g}.
				\end{equation*}
				Writing now explicitly $g$ as $Vu,$ by assumption~\eqref{smallness_condition_l} we have
				\begin{equation*}
					\norm{\nabla u} <\frac{1}{\mu} \frac{4d}{d-2} \norm{\abs{x} f} 
					+\frac{\Lambda}{\mu} \frac{4d}{d-2} \norm{\nabla u} 
				\end{equation*}
				or, more explicitly
				\begin{equation*}
					\left( 
							1-\frac{\Lambda}{\mu} \frac{4d}{d-2}
					\right) \norm{\nabla u} 
					< \frac{1}{\mu} \frac{4d}{d-2} \norm{\abs{x} f}.
				\end{equation*}
				The condition~\eqref{Lambda_condition} about $\Lambda$ guarantee the positivity of the parenthesis of the left hand side and then the theorem is proved.
		\end{description}	
		\end{proof}
		
	Finally, we are in position to prove the uniform resolvent estimate we are looking for.
	\begin{proof}[Proof of Theorem~\eqref{thm:uniform_resolvent_estimate}]
		First of all, we consider the case $\abs{k_2} \leq k_1,$ as a consequence of~\eqref{a_priori_u_S_minus_u_P_minus}, making use of the Hardy's inequality, it is not difficult to show that the following chain of inequalities holds
	\begin{equation*}
		\begin{split}
			\norm{\abs{x}^{-1} u} &\leq \frac{1}{\sqrt{\mu}} \norm{\abs{x}^{-1} u_S^-} + \frac{1}{\sqrt{\lambda + 2\mu}} \norm{\abs{x}^{-1} u_P^-}
														\leq \frac{2}{d-2} \frac{1}{\sqrt{\min\{\mu, \lambda+ 2\mu\}}} (\norm{\nabla u_S^-} + \norm{\nabla u_P^-})\\
														&\leq \frac{4c}{d-2} \frac{1}{\sqrt{\min\{\mu, \lambda+ 2\mu\}}} \norm{\abs{x} f}.
		\end{split}												
	\end{equation*}
	Assuming $\abs{k_2} > k_1,$ using~\eqref{a_priori_u} and again the Hardy's inequality, we have 
	\begin{equation*}
		\norm{\abs{x}^{-1} u}\leq \frac{2c}{d-2}  \norm{\abs{x} f}.
	\end{equation*}
\end{proof}


\begin{thebibliography}{99}

\bibitem{A_A_D}
	A. A. Abramov, A. Aslanyan and E. B. Davies, 
	\emph{Bounds on complex eigenvalues and resonances}, 
	J.Phys. A: Math. Gen. 34 (2001), 57-72.

\bibitem{B_F_R_V}
	J. A. Barcel\'o, L. Fanelli, A. Ruiz and M. Vilela,
	\emph{A priori estimates for the Helmholtz equation with electromagnetic potentials in exterior domains},
	Proc. Royal Soc. Edinburgh Sec. A 143 (2013), 1-19.

\bibitem{B_F_R_V_V}
	J.A. Barcel\'o, L. Fanelli, A. Ruiz, M. C. Vilela and N. Visciglia
	\emph{Resolvent and Strichartz estimates for elastic wave equations}
	Appl. Math. Letters 49 (2015), 33-41.
		
\bibitem{B_F-G_P-E_R_V}
	J. A. Barcel\'o, M. Folch-Gabayet, S. Perez-Esteva, A. Ruiz and M. Vilela, 
	\emph{Limiting absorption principles for the Navier equation in eleasticity}, 
	Ann. SNS 11 (2012) 817-842.

\bibitem{B_V_Z}
	J. A. Barcel\'o, L. Vega and M. Zubeldia, 
	\emph{The forward problem for the electromagnetic Helmholtz equation with critical singularities}, 
	Adv. Math. 240 (2013), 636-671.

\bibitem{B_O}
	V. Bruneau and E. M. Ouhabaz, 
	\emph{Lieb-Thirring estimates for non self-adjoint Schroedinger operators},
	J. Math. Phys. 49 (2008), 093504.

\bibitem{B_P_S_T-Z}
	N. Burq, F. Planchon, J. Stalker and S. Tahvildar-Zadeh, 
	\emph{Strichartz estimates for the wave and Schr\"odinger equations with the inverse-square potential}, 
	J. Funct. Anal. 203 (2003), 519-549.

\bibitem{B_P_S_T-Z_II}
	N. Burq, F. Planchon, J. Stalker and S. Tahvildar-Zadeh, 
	\emph{Strichartz estimates for the wave and Schr\"odinger equations with potentials of critical decay}, 
	Indiana Univ. Math. J. 53 (2004), 1665-1680.

\bibitem{C_D_L}
	F. Cacciafesta, P. D'Ancona and R. Lucà,
	\emph{Helmholtz and dispersive equations with variable coefficients on exterior domains},
		arXiv:1403.5288 [math.AP] (2014).
		
\bibitem{Colin}
	Y. Colin de Verdiére, 
	\emph{Elastic wave equation}, 
	Actes du Séminaire de Théorie Spectrale et Géométrie. 25 (2006-2007); 
	\emph{Sémin. Théor. Spectr. Géom.}, 25 (2008), 55-69.

\bibitem{D_H_K}
	M. Demuth, M. Hansmann and G. Katriel,
	\emph{On the discrete spectrum of non-selfadjoint operators},
	J. Funct. Anal. 257 (2009), 2742-2759. 

\bibitem{D_H_K-II}
	M. Demuth, M. Hansmann and G. Katriel,
	\emph{Eigenvalues of non-selfadjoint operators: A comparison of two Approaches}
	in: Mathematica Physics, Spectral Theory and Stochastic Analysis, Springer (2013), 107-163.
	
\bibitem{Duoandikoetxea}
	J. Duoandikoetxea, 
	\emph{Desigualdades con peso en An\'alisis Arm\'onico}, 
	Escuela Venezolana de Matem\'aticas, Asociaci\'on Matem\'atica Venezolana, Centro de Estudios Avanzados, 
	Instituto Venezolano de Investigaciones Cient\'ificas, (1994).
	
\bibitem{Fanelli}
	L. Fanelli,
	\emph{Non-trapping magnetic fields and Morrey-Campanato estimates for Schr\"odinger operators},
	J. Math. Anal. Appl. 357 (2009), 1-14.

\bibitem{F_K_V}
	L. Fanelli, D. Krej\v{c}i\v{r}\'ik and L. Vega, 
	\emph{Spectral stability of Schr\"odinger operators with subordinated complex potentials}, 
	arXiv:1506.01617[math.SP] (2015).

\bibitem{F_V}
	L. Fanelli and L. Vega,
	\emph{Magnetic virial identities, weak dispersion and Strichartz inequalities},
	Math. Ann. 344 (2009), 249-278.	

\bibitem{F_S}
	R. L. Frank and B. Simon,
	\emph{Eigenvalue bounds for Schr\"odinger operators with complex potentials. II},
	arXiv:1504.01144 [math.SP] (2015).

\bibitem{H_I}
	P. D. Hislop and I. M. Sigal,
	\emph{Introduction to Spectral Theory: with applications to Schr\"odinger operators},
	Applied Mathematical Sciences, 113. Springer-Verlag, New York, 1996.

\bibitem{Kato}
	T. Kato,
	\emph{Perturbation theory for linear operators},
	Springer-Verlag, Berlin, 1966.

\bibitem{L_S}
	A. Laptev and O. Safronov,
	\emph{Eigenvalue estimates for Schr\"odinger operators with complex potentials},
	Comm. Math. Phys. 292 (2009), 29-54.

\bibitem{L_T}
	E. H. Lieb and W. Thirring,
	\emph{Bound for the kinetic energy of fermion which proves the stability of matter},
	Phys. Rev. Lett 35 (1975), 687-689.
	
\bibitem{Morawetz}
	C. S. Morawetz,
	\emph{Time decay for the nonlinear Klein-Gordon equation},
	Proc. Roy. Soc. London A 306 (1968), 291-296.

\bibitem{P_V}
	B. Perthame and L. Vega,
	\emph{Morrey-Campanato estimates for the Helmholtz equations},
	J. Funct. Anal 164 (1999), 340-355.
	
\bibitem{P_V-II}
	B. Perthame and L. Vega,
	\emph{Energy concentration and Sommerfeld condition for Helmholtz equation with variable index at infinity},
	Geom. Funct. Anal. 17 (2008), 1685-1707.
	
\bibitem{R_S-II}
	M. Reed and B. Simon,
	\emph{Methods of modern mathematical physics, I. Fourier Analysis, Self-Adjointness}, 
	Academy Press, New York, 1978.

\bibitem{R_S-IV}
	M. Reed and B. Simon,
	\emph{Methods of modern mathematical physics, IV. Analysis of operators}, 
	Academy Press, New York, 1978.

\bibitem{Schmudgen}
	K. Schm\"udgen,
	\emph{Unbounded Self-adjoint Operators on Hilbert Space},
	Springer, 2012.

\bibitem{Schwartz}
	G. Schwartz, 
	\emph{Hodge Decomposition-A Method for Solving Boundary Value Problems}, 
	Springer LN in Mathematics, 1607  (1995).
	
\bibitem{Zubeldia}
	M. Zubeldia,
	\emph{Energy concentration and explicit Sommerfeld radiation condition for the electromagnetic Helmholtz equation},
	J. Funct. Anal. 263 (2012), 2832-2862.

\bibitem{Zubeldia_II}
	M. Zubeldia,
	\emph{Limiting absorption principle for the electromagnetic Helmholtz equation with singular potentials},
	arXiv:1104.4237 [math.AP] (2013).
\end{thebibliography}
\end{document}